\newcommand{\down}[1]{\ensuremath{{\downarrow}\,#1}}
\newcommand{\up}[1]{\ensuremath{{\uparrow}\,#1}}
\begin{document}
\title{Reducts of relation algebras: \\ The aspects of axiomatisability and finite representability \thanks{The research is supported by the project MK-1184.2021.1.1.}}
%
%
\author{Daniel Rogozin \inst{1}\orcidID{0000-0002-6180-4323}}
\authorrunning{D. Rogozin}
%
\institute{Institute for Information Transmission Problems, Russian Academy of Sciences
\email{daniel.rogozin@serokell.io}}
\maketitle              
\begin{abstract}
In this paper, we show that the class of representable residuated semigroups has the finite representation property. That is, every finite representable residuated semigroup is representable over a finite base. This result gives a positive solution to Problem 19.17 from the monograph by Hirsch and Hodkinson \cite{hirsch2002relation}.

We also show that the class of representable join semilattice-ordered semigroups is pseudo-universal and it has a recursively
enumerable axiomatisation. For this purpose, we introduce representability games for join semilattice-ordered semigroups.

\keywords{Algebraic logic \and Relation algebras \and Finite representation property \and Residuated semigroups \and Join semilattice-ordered semigroups.}
\end{abstract}
\section{Introduction}

Relation algebras are a kind of Boolean algebras with operators that provide algebraisation
of binary relations \cite{jonsson1951boolean}. The class of all relation algebras,
denoted as ${\bf RA}$, consists of algebras of the signature $\{ 0, 1, +, -, ;, {}^{\smile}, {\bf 1}' \}$, and all those algebras obey certain axioms.
The class of representable relation algebras, ${\bf RRA}$, consists of algebras isomorphic to set relation algebras. ${\bf RRA}$ is a subclass of ${\bf RA}$, but the converse inclusion does not hold.
That is, there exist non-representable relation algebras
\cite{lyndon1950representation}. Moreover, the class ${\bf RRA}$ is not a finitely axiomatisable variety \cite{monk1964representable} with neither Sahlqvist \cite{venema1997atom} nor canonical axiomatisation \cite{hodkinson2005canonical}. The problem of determining whether a given finite relation algebra
is representable is undecidable, see \cite{hirsch2001representability}.

For this reason, we are interested in reducts since one may extract more positive results in the aspects of decidability, representability, and finite axiomatisability. There are several results on reducts of relation algebras that have no finite axiomatisation. The examples of non-finitely axiomatisable classes are ordered monoids \cite{hirsch2005class}, distributive residuated lattices \cite{andreka1994lambek}, join semilattice-ordered semigroups \cite{andreka2011axiomatizability}, meet semilattice-ordered semigroups with converses \cite{hodkinson2000axiomatizability}, etc. On the other hand, such classes as representable residuated semigroups \cite{andreka1994lambek} and ordered domain algebras \cite{hirsch2013ordered} are finitely axiomatisable. There are also subsignatures for which the question of finite axiomatisability remains open, see, e. g., \cite{andreka2011axiomatizability}.

The other direction we discuss is related to finite representability. A finite algebra of relations has the finite representation property if it is isomorphic to some algebra of relations over a finite base. The investigation of this problem is of interest to study such aspects as decidability of membership of ${\bf R}(\tau)$ for finite structures. The finite representation property also implies recursivity of the class of all finite representable $\tau$-structures \cite{hirsch2004finite}, if the whole class is finitely axiomatisable.  Here, $\tau$ is a subsignature of operations and predicates definable in $\{ 0, 1, +, -, ;, {}^{\smile}, {\bf 1 } \}$. The examples of the class having the finite representation property are some classes of algebras \cite{hirsch2004finite} \cite{hirsch2013ordered} \cite{mclean2016finite}, the subsignature of which contains the domain and range operators. The other kind of algebras of binary relations having the finite representation property is semigroups with so-called demonic refinement has been recently studied by Hirsch and \v{S}emrl \cite{hirsch2021finite}, but the same authors have recently shown that semigroups with demonic joins fail to have the finite representation property \cite{9470509}.

There are subsignatures $\tau$ such that the class ${\bf R}(\tau)$ of representable reducts fails to have the finite representation property, for example, $\{;, \cdot\}$, see \cite[Theorem 4.1]{hirsch2021finite}. In general, (un)decidability of determining whether a finite relation algebra has a finite representation is an open question \cite[Problem 18.18]{hirsch2002relation}.

In this paper, we consider reducts of relation algebras the signature of which consists of composition, residuals, and the binary relation symbol that denotes partial ordering. That is, we study the class of representable residuated semigroups. We show that ${\bf R}(;, \setminus, /, \leq)$ has the finite representation property. As a result, Problem 19.17 of \cite{hirsch2002relation} has a positive solution. The solution is based on the Dedekind-MacNeille completions and relational representations of quantales. We embed a finite residuated semigroup into a finite quantale by mapping every element to its lower cone. After that, we apply the relational representation for quantales. As a result, the original finite residuated semigroup has a Zaretski-style representation \cite{zaretskii1959representation} and this satisfies the finite base requirement.

In the final section, we study the class of representable join semilattice-ordered semigroups, denoted as ${\bf R}(;,+)$. It is
already known that this class is not finitely axiomatisable \cite{andreka2011axiomatizability}.
We show that ${\bf R}(;,+)$ has a recursively enumerable axiomatisation. For that,
we define networks and representability games. This class is axiomatised with the axioms of join semilattice-ordered semigroups plus the countable set of universal formulas claiming that
$\exists$ has a winning strategy on every finite step. The question of finite representability for this class remains open, see \cite[Problem 2]{10.1007/978-3-030-88701-8_29}.

\section{Definitions}

\subsection{Relation algebras and their reducts}

Let us introduce some basic definitions related to relation algebras. See \cite[Section 3]{hirsch2002relation} to have more details.
\begin{definition} A relation algebra is an algebra $\mathcal{R} = \langle R, 0, 1, +, -, ;, {}^{\smile}, {\bf 1 }\rangle$ such that $\langle R, 0, 1, +, - \rangle$ is a Boolean algebra, $\langle R, ;, {\bf 1} \rangle$ is a monoid, and the following equations hold, for all $a, b, c \in R$:
    \begin{enumerate}
      \item $(a + b) ; c = (a ; c) + (b ; c)$,
      \item $a^{\smile \smile} = a$,
      \item $(a + b)^{\smile} = a^{\smile} + b^{\smile}$,
      \item $(a ; b)^{\smile} = b^{\smile} ; a^{\smile}$,
      \item $a^{\smile} ; (- (a ; b)) \leq - b$.
    \end{enumerate}
where $a \leq b$ is defined usually as $a + b = b$. ${\bf RA}$ is the class of all relation algebras.
\end{definition}

\begin{definition}
    A proper relation algebra (or, a set relation algebra) is an algebra $\mathcal{R} = \langle R, 0, 1, \cup, -, ;, {}^{\smile}, {\bf 1 }\rangle$ such that $R \subseteq \mathcal{P}(W)$, where $X$ is a base set, $W \subseteq X \times X$ is an equivalence relation, $0 = \emptyset$, $1 = W$, $\cup$ and $-$ are set-theoretic union and complement respectively, $;$ is relation composition, ${}^{\smile}$ is relation converse,
    ${\bf 1}'$ is the identity relation restricted to $W$, that is:
    \begin{enumerate}
    \item $a ; b = \{ (x, z) \in W \: | \: \exists y \: (x, y) \in a \: \& \: (y, z) \in b \}$
    \item $a^{\smile} = \{ (x, y) \in W \: | \: (y, x) \in a \}$
    \item ${\bf 1}' = \{ (x, y) \in W \: | \: x = y \}$
    \end{enumerate}
       ${\bf PRA}$ is the class of all proper relation algebras. ${\bf RRA}$ is the class of all representable relation algebras, that is, the closure of ${\bf PRA}$ under isomorphic copies.
\end{definition}

Let $\tau$ be a subset of operations and predicates definable in ${\bf RA}$. ${\bf R}(\tau)$ is the class of subalgebras of $\tau$-subreducts of algebras belonging to ${\bf RRA}$. We also assume that ${\bf R}(\tau)$ is closed under isomorphic copies. A $\tau$-structure is \emph{representable} if it is isomorphic to some algebra of relations of this signature. A representable finite $\tau$-structure has a \emph{finite representation over a finite base} if it is isomoprhic to some finite representable over a finite base. ${\bf R}(\tau)$ has the finite representation property if every $\mathcal{A} \in {\bf R}(\tau)$ has a finite representation over a finite base.

\subsection{Residuated semigroups}

A \emph{residuated semigroup} is a structure $\mathcal{A} = \langle A, ;, \leq, \setminus, / \rangle$ such that, for all $a, b, c \in \mathcal{A}$:

\begin{enumerate}
    \item $\leq$ is reflexive, antisymmetric, and transitive.
    \item $a ; (b ; c) = (a ; b) ; c$.
    \item $a \leq b \Rightarrow a ; c \leq b ; c$ and $a \leq b \Rightarrow c ; a \leq c ; b$.
    \item $b \leq a \setminus c \Leftrightarrow a ; b \leq c \Leftrightarrow a \leq c \: / \: b$.
\end{enumerate}

We can express residuals in every $\mathcal{R} \in {\bf RA}$ using Boolean negation, inversion, and composition as follows:

\begin{enumerate}
  \item $a \setminus b = -(a^{\smile} ; -b)$
  \item $a \: / \: b = - (- a ; b^{\smile})$
\end{enumerate}

These residuals have the following explicit definition in $\mathcal{R} \in {\bf PRA}$:
\begin{enumerate}
  \item $a \setminus b = \{ (x, y) \: | \: \forall z \: (z, x) \in a \Rightarrow (z, y) \in b \}$
  \item $a \: / \: b = \{ (x, y) \: | \: \forall z \: (y, z) \in b \Rightarrow (x, z) \in a \}$
\end{enumerate}

One can visualise residuals in ${\bf RRA}$ with the following triangles:

\xymatrix{
&& \exists y \ar@{-->}[ddr]^{b} &&& \forall z \ar@{-->}[ddl]_{a} \ar@{-->}[ddr]^{b} &&& \forall z \\
&&&&& \Rightarrow &&& \Leftarrow \\
& x \ar@{-->}[uur]^{a} \ar[rr]_{a;c} && z & x \ar[rr]_{a \setminus b} && y & x \ar@{-->}[uur]^{a} \ar[rr]_{a / b} && y \ar@{-->}[uul]_{b}
}

Thus, in particular, every relation algebra is a residuated lattice.

\subsection{Join semilattice-ordered semigroups}

A \emph{join semilattice-ordered semigroup} is an algebra $\mathcal{A} = \langle A, ;, + \rangle$ such that $\langle A, ; \rangle$ is a semigroup, $\langle A, + \rangle$ is a join-semilattice, and the following identities hold, for all $a, b, c \in A$:
\begin{enumerate}
\item $a ; (b + c) = a ; b + a ; c$,
\item $(a + b) ; c = a ; c + b ; c$.
\end{enumerate}
A join semilattice-ordered semigroup is also a poset and ordering is defined as $a \leq b$ iff $a + b = b$.

\begin{definition}\label{joinrep}
  A \emph{representation} $R$ of a join semilattice-ordered semigroup $\mathcal{A}$
  is a one-to-one map $R : \mathcal{A} \to 2^{D \times D}$ (where $D$ is a non-empty base set) such that
  \begin{enumerate}
    \item $(a + b)^R = a^R \cup b^R$,
    \item $(a ; b)^R = a^R ; b^R$.
  \end{enumerate}
\end{definition}

A join semilattice-ordered semigroup $\mathcal{A}$ is \emph{representable}, if there exists a representation $R : \mathcal{A} \to 2^{D \times D}$ for some non-empty base set $D$.

\subsection{Order-theoretic definitions}

Let us also remind the reader several order-theoretic notions, see \cite[Chapter 1]{davey2002introduction} for more details. Let $\langle P, \leq \rangle$ be a partial order. An upper cone generated by $x$ is the set $\up{x} = \{ a \in P \: | \: x \leq a \}$. Let $A \subseteq P$, then $\up{A} = \bigcup \limits_{x \in A} \up{x} = \{ a \in P \: | \: \exists x \in P \: x \leq a \}$. The set of all upper cones of a poset $\langle P, \leq \rangle$ is denoted as $\operatorname{Up}(P)$.
Given $a \in P$, the \emph{lower cone} generated by $a$ is a subset $\down{a} = \{ x \in P \: | \: x \leq a \}$. The lower cone generated by a subset is defined similarly.

A \emph{closure operator} on a poset $\langle P, \leq \rangle$ is a monotone map $j : P \to P$ such that for all $a \in P$ we have $a \leq j a = j j a$.

\subsection{Pseudo-elementary classes}

The following definitions are due to \cite[Section 9]{hirsch2002relation}.
Let $\mathcal{K}$ be a class of structures of a first-order signature $\mathcal{L}$. $\mathcal{K}$ is called a \emph{pseudo-elementary} class if there are:
\begin{enumerate}
\item a two-sorted language $\mathcal{L}^{s}$ with disjoint sorts ${\bf a}$ and ${\bf r}$ that contains all symbols of $\mathcal{L}$ as ${\bf a}$-sorted symbols,
\item an $\mathcal{L}^{s}$-theory $T$, the defining theory.
\end{enumerate}
such that $\mathcal{K} = \{ \mathcal{M}^{\bf a} \upharpoonright_{\mathcal{L}} \: | \: \mathcal{M} \models T \}$.
More generally, a pseudo-elementary class is a reduct of an elementary class, see \cite{eklof1977ultraproducts}.

A pseudo-elementary class is \emph{pseudo-universal} if
\begin{enumerate}
\item a function symbol in $\mathcal{L}^{s}$ that differs from copies of $\mathcal{L}$ ones takes values in sort ${\bf r}$,
\item Every sentence in $T$ is built from atomic formulas and negated-atomic formulas using $\vee$, $\land$, $\forall x^{\bf a}$, $\forall x^{\bf r}$, $\exists x^{\bf r}$.
\end{enumerate}

We are going to use the following fact to axiomatise the class of representable join semilattice-ordered semigroups, see \cite[Corollary 9.15, Theorem 9.28]{hirsch2002relation}:

\begin{theorem} \label{axiomatise}
$ $
\begin{enumerate}
\item If $\mathcal{K}$ is a pseudo-universal class, then $\mathcal{K}$ is elementary and universally axiomatisable.
\item Let $\mathcal{K} = \{ \mathcal{M}^{\bf a} \upharpoonright_{\mathcal{L}} \: | \: \mathcal{M} \models T \}$ be a pseudo-universal class of $\mathcal{L}$-structures, where $T$ is an $\mathcal{L}^{s}$-theory and $\mathcal{L}$, $\mathcal{L}^{s}$, $T$ are recursively enumerable.
Then there exists the set of $\mathcal{L}$-sentences $\{ \eta_n \}_{n < \omega}$ from $T$ such that $\mathcal{A} \in \mathcal{K}$ iff $\mathcal{A} \models \{ \eta_n \}_{n < \omega}$. That is, $\{ \eta_n \}_{n < \omega}$ axiomatises $\mathcal{K}$.
\end{enumerate}
\end{theorem}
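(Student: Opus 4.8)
The plan is to prove the two items separately; both are essentially an exercise in classical model theory, and the only place where the specific shape of a pseudo-universal defining theory is really exploited is one closure-under-substructures argument. For item~1, I would show that a pseudo-universal class $\mathcal{K} = \{ \mathcal{M}^{\bf a}\upharpoonright_{\mathcal{L}} : \mathcal{M} \models T \}$ is closed under isomorphic copies, under ultraproducts, and under substructures, and then invoke the {\L}o\'{s}--Tarski-style preservation theorem that a class of $\mathcal{L}$-structures is axiomatisable by a set of universal sentences precisely when it enjoys exactly these three closure properties; ``elementary'' then follows at once, universal sentences being first-order. Closure under isomorphic copies is assumed. Closure under ultraproducts is the general fact that any pseudo-elementary class has it: if $\mathcal{M}_i \models T$ for $i \in I$ and $U$ is an ultrafilter on $I$, then $\prod_U \mathcal{M}_i \models T$ by {\L}o\'{s}'s theorem, and forming the ${\bf a}$-sorted $\mathcal{L}$-reduct commutes with the ultraproduct, so $\prod_U(\mathcal{M}_i^{\bf a}\upharpoonright_{\mathcal{L}}) \in \mathcal{K}$.

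The substantive step of item~1 is closure under substructures, and this is where I would use clauses~1 and~2 of the definition of pseudo-universality. Let $\mathcal{A} = \mathcal{M}^{\bf a}\upharpoonright_{\mathcal{L}} \in \mathcal{K}$ with $\mathcal{M} \models T$, and let $\mathcal{B}$ be an $\mathcal{L}$-substructure of $\mathcal{A}$. I would build a two-sorted structure $\mathcal{N}$ whose ${\bf a}$-sort is $\mathcal{B}$, whose ${\bf r}$-sort is $\mathcal{M}^{\bf r}$, and whose remaining (mixed- or ${\bf r}$-valued) function and relation symbols are interpreted as the restrictions of those of $\mathcal{M}$. This is well defined exactly because, by clause~1, every function symbol of $\mathcal{L}^{s}$ that is not a copy of an $\mathcal{L}$-symbol is ${\bf r}$-valued, so restricting its ${\bf a}$-typed arguments to $B$ causes no escape from the domain. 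One then checks $\mathcal{N} \models T$ by induction on the build-up of the sentences of $T$, over assignments whose ${\bf a}$-variables lie in $B$: atomic and negated-atomic formulas are preserved in both directions between $\mathcal{M}$ and its substructure $\mathcal{N}$; $\wedge$ and $\vee$ are trivial; a $\forall x^{\bf a}$ survives because $B \subseteq A$; and $\forall x^{\bf r}$ and $\exists x^{\bf r}$ behave identically because the ${\bf r}$-sort of $\mathcal{N}$ is literally that of $\mathcal{M}$. The crucial point is that clause~2 forbids $\exists x^{\bf a}$, so at no stage is a witness in $A \setminus B$ required. Hence $\mathcal{B} = \mathcal{N}^{\bf a}\upharpoonright_{\mathcal{L}} \in \mathcal{K}$.

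For item~2, given an $\mathcal{L}$-sentence $\sigma$, let $\sigma^{(\bf a)}$ be its relativisation to sort ${\bf a}$ (replace each quantifier by its ${\bf a}$-sorted version and each $\mathcal{L}$-symbol by its ${\bf a}$-copy), so that $\mathcal{M} \models \sigma^{(\bf a)}$ iff $\mathcal{M}^{\bf a}\upharpoonright_{\mathcal{L}} \models \sigma$. Since first-order $\mathcal{L}$-sentences are isomorphism-invariant, $\mathcal{K} \models \sigma$ iff $T \models \sigma^{(\bf a)}$, and by the completeness theorem for (many-sorted) first-order logic this holds iff $T \vdash \sigma^{(\bf a)}$. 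As $T$, $\mathcal{L}$, $\mathcal{L}^{s}$ are recursively enumerable and $\sigma \mapsto \sigma^{(\bf a)}$ is computable, the set $\Sigma = \{\, \sigma : \sigma \text{ an } \mathcal{L}\text{-sentence and } T \vdash \sigma^{(\bf a)} \,\}$ is recursively enumerable; enumerate it as $\{\eta_n\}_{n<\omega}$. By item~1, $\mathcal{K}$ is an elementary (indeed universally axiomatisable) class, so the set of all $\mathcal{L}$-sentences true throughout $\mathcal{K}$ — which is exactly $\Sigma$ — axiomatises $\mathcal{K}$; thus $\mathcal{A} \in \mathcal{K}$ iff $\mathcal{A} \models \{\eta_n\}_{n<\omega}$. (So the recursive enumerability is almost free; item~1 is what guarantees these sentences actually cut out $\mathcal{K}$.)

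The main obstacle is the substructure-closure step of item~1: one has to check with some care that the restricted quantifier pattern permitted in a pseudo-universal defining theory is genuinely preserved when the ${\bf a}$-sort is shrunk while the ${\bf r}$-sort is kept fixed, and in particular that no existential quantifier over sort ${\bf a}$ is smuggled in through a non-$\mathcal{L}$ function symbol — which is precisely what clause~1 of the definition rules out. Once that is in place, everything else is a direct appeal to standard tools: the {\L}o\'{s}--Tarski characterisation of universal classes, {\L}o\'{s}'s theorem, and the completeness theorem.
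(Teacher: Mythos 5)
Your proof is correct, and it is essentially the standard argument: the paper itself does not prove this theorem but imports it from Hirsch and Hodkinson (\cite[Corollary 9.15, Theorem 9.28]{hirsch2002relation}), and your reconstruction — substructure/ultraproduct closure plus the {\L}o\'{s}--Tarski preservation theorem for item~1, and relativisation to sort ${\bf a}$ together with completeness and an enumeration of proofs from the r.e.\ theory $T$ for item~2 — follows the same route as that source. The one step deserving the care you gave it, closure under substructures via shrinking the ${\bf a}$-sort while keeping the ${\bf r}$-sort fixed, is handled correctly (note only that ${\bf a}$-sorted constants not copied from $\mathcal{L}$ are excluded by clause~1 as $0$-ary function symbols, so the restricted domain really is closed under all of $\mathcal{L}^{s}$).
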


\section{The finite representation property for residuated semigroups}

The problem we are interested in is the following \cite[Problem 19.17]{hirsch2002relation}:

\begin{center}
  Does ${\bf R}(;, \setminus, /, \leq)$ have the finite representation property?
\end{center}

The class ${\bf R}(;, \setminus, /, \leq)$ consists of the following structures, here is the explicit definition:

\begin{definition} \label{rrs}
  Let $A$ be a set of binary relations on some base set $W$ such that $R = \cup A$ is transitive and $W$ is a domain of $R$. A relational residuated semigroup is an algebra $\mathcal{A} = \langle A, ;, \setminus, /, \subseteq \rangle$ where, for all $a, b \in A$:
  \begin{enumerate}
    \item $a ; b = \{ (x, z) \: | \: \exists y \in W \: ((x, y) \in a \: \& \: (y, z) \in b) \}$,
    \item $a \setminus b = \{ (x, y) \: | \: \forall z \in W \: ((z, x) \in a \Rightarrow (z, y) \in b)\}$,
    \item $a \: / \: b = \{ (x, y) \: | \: \forall z \in W \: ((y, z) \in b \Rightarrow (x, z) \in a)\}$,
    \item $a \leq b$ iff $a \subseteq b$.
  \end{enumerate}
\end{definition}

A residuated semigroup is called \emph{representable} if it is isomorphic to some algebra that belongs to ${\bf R}(;, \setminus, /, \leq)$.

\begin{definition}
  Let $\tau = \{ ;, \setminus, /, \leq \}$, let $\mathcal{A}$ be a $\tau$-structure and $X$ a base set. An \emph{interpretation} $R$ over a base $X$ maps every $a \in \mathcal{A}$ to a binary relation $a^R \subseteq X \times X$. A \emph{representation} of $\mathcal{A}$ is an interpretation $R$ that interprets operations and $\leq$ as above.
\end{definition}

Andr\'{e}ka and Mikul\'{a}s proved the representation theorem for residuated semigroups (\cite{andreka1994lambek}) in the step-by-step fashion. See this paper to learn more about step-by-step representations in general \cite{hirsch1997step}.
The representation theorem for residuated semigroups obviously implies that the class ${\bf R}(;, \setminus, /, \leq)$ is finitely axiomatisable. As it is well known, the logic of such structures is the Lambek calculus \cite{lambek1958mathematics}, so we also have the relational completeness of the Lambek calculus. With our result, we also have a version of the finite model property for the Lambek calculus since one can refute any unprovable sequent in some finite relational residuated semigroup over a finite base. This is a corollary of our result and the fact that the Lambek calculus is complete w.r.t finite residuated semigroups, see \cite[Chapter 7, Section 7.4]{galatos2007residuated} to have an even stronger result.

It is sufficient to show that any finite residuated semigroup has a representation over a finite base in order to show that ${\bf R}(;, \setminus, /, \leq)$ has the finite representation property. For that, we will use the representation of residuated
semigroups as subalgebras of quantales and the relational representation of quantales.

A quantale is a complete lattice-ordered semigroup. That is, a binary operation respects suprema in both arguments. Quantales have been introduced by Mulvey to provide a noncommutative generalisation of locales, see \cite{mulvey1986suppl}.

\begin{definition}
  A \emph{quantale} is a structure $\mathcal{Q} = \langle Q, ;, \Sigma \rangle$ such that $\mathcal{Q} = \langle Q, \Sigma \rangle$ is a complete lattice, where $\Sigma$ denotes an infinite join, $\langle Q, ; \rangle$ is a semigroup, and the following conditions hold for all $a \in Q$ and $A \subseteq Q$:
  \begin{enumerate}
    \item $a \: ; \: \Sigma A = \Sigma \{ a ; q \: | \: q \in A \}$,
    \item $\Sigma A \: ; \: a = \Sigma \{ q ; a \: | \: q \in A \}$.
  \end{enumerate}
\end{definition}

\begin{definition}\label{gen}
Given a quantale $\mathcal{Q} = \langle Q, ;, \Sigma \rangle$, a set of \emph{generators} is a subset $G \subseteq \mathcal{Q}$, if
\begin{enumerate}
  \item For all $q \in Q$ one has $q \leq \Sigma \{ g \in G \: | \: g \leq q \}$,
  \item For all $g \in G$ and $q_1, q_2 \in \mathcal{Q}$, $g \leq q_1 ; q_2$ implies $g \leq q_1 ; r$ for some $r \in G$ with $r \leq q_2$.
\end{enumerate}
\end{definition}

The existence of a set of generators for an arbitrary quantale has been shown here \cite[Lemma 3.12]{brown1993representation}.

Note that any quantale is a residuated semigroup as well. Given a quantale $\mathcal{Q} = \langle Q, ;, \Sigma \rangle$, One may express residuals with supremum and product as follows for all $a, b \in Q$:
\begin{enumerate}
  \item $a \setminus b = \Sigma \{ c \in Q \: | \: a ; c \leq b \}$,
  \item $a \: / \: b = \Sigma \{ c \in Q\: | \: b ; c \leq a \}$.
\end{enumerate}
It is readily checked that residuals are unique.

A quantic nucleus is a closure operator on a quantale. Such an operator is a noncommutative generalisation of a nucleus operator from locale theory. The following definition and the proposition below are due to \cite[Definition 3.1.1, Theorem 3.1.1]{rosenthal1990quantales} respectively.
\begin{definition}
  A \emph{quantic nucleus} on a quantale $\langle A, ;, \Sigma \rangle$ is a mapping $j : A \to A$ such that $j$ a closure operator satisfying $j a ; j b \leq j (a ; b)$.
\end{definition}

\begin{proposition} \label{subsemi}
  Let $\mathcal{A} = \langle A, ;, \Sigma \rangle$ be a quantale and $j$ a quantic nucleus, the set
  $\mathcal{A}_j = \{ a \in A \: | \: j a = a \}$ forms a quantale, where $a ;_j b = j(a ; b)$ and $\Sigma_j A = j (\Sigma A)$ for all $a, b \in {A}_j$ and $A \subseteq \mathcal{A}_j$.
\end{proposition}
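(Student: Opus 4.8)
The plan is to verify directly that $\mathcal{A}_j = \{a \in A \mid ja = a\}$, equipped with $a;_j b = j(a;b)$ and $\Sigma_j A = j(\Sigma A)$, satisfies the axioms of a quantale. First I would establish the basic completeness facts. Since $j$ is a closure operator, the standard fact from order theory is that the fixed-point set $\mathcal{A}_j$ is closed under arbitrary meets computed in $\mathcal{A}$, hence is itself a complete lattice; its joins, however, are not the joins of $\mathcal{A}$ but are given by $\Sigma_j A = j(\Sigma A)$, where one checks that $j(\Sigma A)$ is indeed the least element of $\mathcal{A}_j$ above every member of $A$ — monotonicity and idempotency of $j$ give this immediately. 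So $\langle \mathcal{A}_j, \Sigma_j\rangle$ is a complete lattice, and its order is the restriction of the order of $\mathcal{A}$.

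Next I would check that $;_j$ is associative on $\mathcal{A}_j$. This is where the nucleus inequality $ja;jb \leq j(a;b)$ does the work. For $a,b,c \in \mathcal{A}_j$ one wants $j(j(a;b);c) = j(a;j(b;c))$. Using monotonicity of $j$, idempotency $jj = j$, the nucleus property, and associativity of $;$ in $\mathcal{A}$, one shows both sides equal $j(a;b;c)$: for instance $j(j(a;b);c) = j(j(a;b);jc) \leq j(j((a;b);c)) = j(a;b;c)$ by the nucleus law, and the reverse inequality $j(a;b;c) \leq j(j(a;b);c)$ follows since $a;b \leq j(a;b)$ gives $a;b;c \leq j(a;b);c$ and then apply $j$. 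The symmetric computation handles $j(a;j(b;c))$.

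Finally I would verify that $;_j$ distributes over $\Sigma_j$ in each argument, i.e. $a ;_j \Sigma_j B = \Sigma_j\{a ;_j q \mid q \in B\}$ for $a \in \mathcal{A}_j$, $B \subseteq \mathcal{A}_j$. Unfolding, the left side is $j(a; j(\Sigma B))$ and the right side is $j(\Sigma\{j(a;q) \mid q \in B\})$. Using the nucleus law one shows $j(a;j(\Sigma B)) = j(a;\Sigma B)$ (the inequality $\leq$ is the nucleus property applied with $ja = a$, and $\geq$ is monotonicity since $\Sigma B \leq j(\Sigma B)$); then $a;\Sigma B = \Sigma\{a;q \mid q \in B\}$ by the quantale law in $\mathcal{A}$, and applying $j$ together with the fact that $j(\Sigma\{x_q\}) = j(\Sigma\{jx_q\})$ (again from monotonicity and idempotency of $j$) collapses the right side to the same value. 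The argument for the other coordinate is symmetric. The main obstacle — though it is really just bookkeeping — is keeping straight which join and which product one is working in at each step, and repeatedly invoking the trio (monotone, inflationary, idempotent) together with the single nucleus inequality in the right direction; no genuinely new idea is needed beyond those.
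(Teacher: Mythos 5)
Your proof is correct and is essentially the standard argument: the paper itself gives no proof of this proposition, citing it as Theorem 3.1.1 of Rosenthal's \emph{Quantales and their applications}, and your verification (fixed points of a closure operator form a complete lattice with $\Sigma_j = j \circ \Sigma$; both associativity expressions and both distributivity expressions collapse to $j(a;b;c)$ and $j(\Sigma\{a;q\})$ respectively via the nucleus inequality plus monotonicity, inflation, and idempotency) is exactly the computation carried out there. Nothing is missing.
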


One can embed any residuated semigroup into some quantale with using Dedekind-MacNeille completion (see, for example, \cite{theunissen2007macneille}) as follows. According to Goldblatt \cite{goldblatt2006kripke}, residuated semigroups have the following representation based on quantic nuclei and the Galois connection. We need the construction for the solution, so we discuss it briefly. See Goldblatt's paper to have a complete argument in more detail \cite{goldblatt2006kripke}.

Let $\mathcal{A} = \langle A, \leq, ;, \setminus, / \rangle$ be a residuated semigroup. Then $\langle \mathcal{P}(A), ;, \bigcup \rangle$ is a quantale with pairwise product of subsets.

Let $X \subseteq A$. We put $lX$ and $uX$ as the sets of lower and upper bounds of $X$ in $A$. We also put $m X = lu X$.
Note that the lower cone of an arbitrary $x$ is $m$-closed, that is, $m (\down{x}) = \down{x}$.

$m : \mathcal{P}(A) \to \mathcal{P}(A)$ is a closure operator and the set

\begin{center}
$(\mathcal{P}(A))_m = \{ X \in \mathcal{P}(S) \: | \: m X = X\}$
\end{center}
forms a complete lattice with $\Sigma_{m} \mathcal{X} = m ( \bigcup \mathcal{X})$ and $\Pi_{m} = \bigcap \mathcal{X}$, see \cite[Theorem 7.3]{davey2002introduction}. The key observation is that $m$ is a quantic nucleus on $\mathcal{P}(A)$, that is, $m A ; m B \subseteq m (A ; B)$. We refer here to the aforementioned paper by Goldblatt. Thus, according to Proposition~\ref{subsemi}, $\langle (\mathcal{P}(A))_m, \subseteq, ;_m \rangle$ is a quantale itself since $m$ is a quantic nucleus.

We define a map $f_m : \mathcal{A} \to (\mathcal{P}(A))_m$ such that $f_m : a \mapsto \down{a}$. This map is well-defined since any lower cone generated by a point is $m$-closed. Moreover, $f_m$ preserves products, residuals, and existing suprema. In particular, $f_m$ is a residuated semigroup embedding. As a result, we have the following representation theorem \cite[Corollary 2]{goldblatt2006kripke}.

\begin{theorem} \label{orsRep}
  Every residuated semigroup is isomorphic to the subalgebra of some quantale.
\end{theorem}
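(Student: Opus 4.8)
The statement to prove is Theorem~\ref{orsRep}: Every residuated semigroup is isomorphic to the subalgebra of some quantale.

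The plan is essentially to package together all the machinery assembled in the preceding paragraphs into a single verification. Given a residuated semigroup $\mathcal{A} = \langle A, \leq, ;, \setminus, / \rangle$, I would take the target quantale to be $\langle (\mathcal{P}(A))_m, \subseteq, ;_m \rangle$, which is a genuine quantale by Proposition~\ref{subsemi} together with the fact (attributed to Goldblatt) that $m = lu$ is a quantic nucleus on the powerset quantale $\langle \mathcal{P}(A), ;, \bigcup \rangle$. The candidate embedding is $f_m : a \mapsto \down{a}$, and I must check three things: that it lands in $(\mathcal{P}(A))_m$, that it is injective, and that it is a homomorphism for $;$, $\setminus$, $/$, and $\leq$.

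First I would record that $f_m$ is well-defined, i.e.\ $m(\down{a}) = \down{a}$; this is the remark already made in the excerpt that principal lower cones are $m$-closed, and it follows because $u(\down{a}) = \up{a}$ and $l(\up{a}) = \down{a}$ by antisymmetry and transitivity of $\leq$. Injectivity is immediate from antisymmetry: $\down{a} = \down{b}$ forces $a \leq b$ and $b \leq a$, hence $a = b$. For the order: $a \leq b$ iff $\down{a} \subseteq \down{b}$, the forward direction by transitivity and the backward direction since $a \in \down{a} \subseteq \down{b}$; so $f_m$ is an order-embedding.

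The substantive part is preservation of the operations. For composition I would show $\down{a} ;_m \down{b} = \down{(a;b)}$, that is $m(\down{a} ; \down{b}) = \down{(a;b)}$: the pairwise product $\down{a} ; \down{b} = \{ x ; y : x \leq a,\ y \leq b \}$ is contained in $\down{(a;b)}$ by monotonicity of $;$, and conversely $a;b$ itself lies in the product, so $\down{a};\down{b}$ generates the same lower cone and applying $m$ gives equality since $\down{(a;b)}$ is already $m$-closed. For the residuals I would use the quantale-level formulas $X \setminus_m Y = \Sigma_m\{ Z : X ;_m Z \subseteq Y \}$ (and symmetrically for $/$) together with the adjunction $b \leq a \setminus c \Leftrightarrow a ; b \leq c \Leftrightarrow a \leq c / b$ in $\mathcal{A}$, to identify $\down{a} \setminus_m \down{c}$ with $\down{(a \setminus c)}$, and similarly for $/$; here one computes that $\{ Z \in (\mathcal{P}(A))_m : \down{a} ;_m Z \subseteq \down{c} \}$ has $\down{(a\setminus c)}$ as its largest member, using the composition identity just proved plus the order-embedding property. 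I expect the residual computation to be the main obstacle, since it requires carefully tracking the interaction between the nucleus $m$, the join $\Sigma_m = m \circ \bigcup$, and the residuation adjunction; none of it is deep, but it is where the bookkeeping is easiest to get wrong. Finally, since $f_m$ is an injective homomorphism preserving $;$, $\setminus$, $/$ and $\leq$, its image is a subalgebra of the quantale $(\mathcal{P}(A))_m$ isomorphic to $\mathcal{A}$, which is the claim. (As the excerpt notes, the full details are in \cite{goldblatt2006kripke}, so in the paper I would likely just cite that source rather than reproduce the residual verification in full.)
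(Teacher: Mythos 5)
Your proposal is correct and follows essentially the same route as the paper: embed $\mathcal{A}$ into the quantale $(\mathcal{P}(A))_m$ of Galois-closed ($m=lu$-closed) subsets via $a \mapsto \down{a}$, using Proposition~\ref{subsemi} and the fact that $m$ is a quantic nucleus, exactly as in the Goldblatt construction the paper sketches and cites. The verifications you outline (well-definedness, injectivity, order-embedding, $\down{a};_m\down{b}=\down{(a;b)}$, and the identification of $\down{a}\setminus_m\down{c}$ with $\down{(a\setminus c)}$ via the supremum formula for residuals) are all sound and are precisely the details the paper delegates to \cite{goldblatt2006kripke}.
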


In turn, quantales are representable with quantales of binary relations. The notion of a relational quantale has been introduced by Brown and Gurr to represent quantales as quantales of relations \cite{brown1993representation}.
\begin{definition}
  Let $A$ be a non-empty set. A \emph{relational quantale} on $A$ is an algebra $\langle R, \subseteq, ; \rangle$, where
  \begin{enumerate}
    \item $R \subseteq \mathcal{P}(A \times A)$,
    \item $\langle R, \subseteq \rangle$ is a complete join-semilattice,
    \item $;$ is a relational composition that respects all suprema in both coordinates.
  \end{enumerate}
\end{definition}

The uniqueness of residuals in any quantale implies the following fact.
\begin{proposition}\label{ok}
  Let $\mathcal{A}$ be a relational quantale over a base set $X$, then for all $a, b \in \mathcal{A}$
  \begin{enumerate}
    \item $a \setminus b = \{ (x, y) \in X^2 \: | \: \forall z \in X ( (z, x) \in a \Rightarrow (z, y) \in b) \}$,
    \item $a \: / \: b = \{ (x, y) \in X^2 \: | \: \forall z \in X ( (y, z) \in b \Rightarrow (x, z) \in b )\}$.
  \end{enumerate}
\end{proposition}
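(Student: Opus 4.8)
The plan is to combine the uniqueness of residuals in a quantale with the elementary description of relational composition. Since $\mathcal{A}$ is itself a quantale, the residual $a \setminus b$ is by definition $\Sigma \{ c \in \mathcal{A} \: | \: a ; c \subseteq b \}$, and it is the \emph{unique} element $r$ of $\mathcal{A}$ for which the adjunction
\[
a ; c \subseteq b \iff c \subseteq r \qquad \text{holds for all } c \in \mathcal{A};
\]
symmetrically, $a \: / \: b$ is the unique $r$ with $b ; c \subseteq a \iff c \subseteq r$ for all $c \in \mathcal{A}$. So it is enough to exhibit the two set-theoretic relations on the right-hand sides of the proposition as elements of $\mathcal{A}$ and to check that they satisfy these adjunctions; uniqueness then forces the equalities.

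First I would fix $a, b \in \mathcal{A}$ and set $S = \{ (x,y) \in X^2 \: | \: \forall z \in X\, ((z,x) \in a \Rightarrow (z,y) \in b) \}$. Unwinding the definition of relational composition immediately gives, for \emph{every} binary relation $c$ on $X$, the equivalence $a ; c \subseteq b \iff c \subseteq S$. In particular $a ; S \subseteq b$ (take $c = S$), and for every $c \in \mathcal{A}$ we have $a ; c \subseteq b$ iff $c \subseteq S$; hence the family $\{ c \in \mathcal{A} \: | \: a ; c \subseteq b \}$ coincides with $\{ c \in \mathcal{A} \: | \: c \subseteq S \}$, and since suprema in the relational quantale $\mathcal{A}$ are computed as set-theoretic unions, its join is $S$ itself. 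Therefore $S \in \mathcal{A}$ and $S = \Sigma \{ c \in \mathcal{A} \: | \: a ; c \subseteq b \} = a \setminus b$. The argument for $a \: / \: b$ is entirely symmetric, unwinding composition in the other argument and reading the corresponding triangle.

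The step that requires care is precisely the identification of $S$ as an element of $\mathcal{A}$, and more sharply, as the join \emph{inside} $\mathcal{A}$ of the family $\{ c \in \mathcal{A} \: | \: c \subseteq S \}$: one must use that in a relational quantale the supremum of a family of relations really is their union rather than merely some member of $\mathcal{A}$ lying above it, so that the quantale residual $\Sigma \{ c \: | \: a ; c \subseteq b \}$ equals the set-theoretic relation $S$ and not some relation strictly below it. Once this feature of the ambient relational quantale is pinned down, the whole proof reduces to the two routine unwindings of the definition of composition together with the uniqueness of residuals inherited from the quantale structure.
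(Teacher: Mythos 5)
You have correctly isolated the crux: everything reduces to showing that the set $S = \{ (x,y) \in X^2 \mid \forall z\,((z,x)\in a \Rightarrow (z,y)\in b)\}$ is an element of $\mathcal{A}$ and is the join, \emph{inside} $\mathcal{A}$, of $\{ c \in \mathcal{A} \mid a ; c \subseteq b\}$. The inclusion $a\setminus b \subseteq S$ is indeed routine: $a ; \Sigma\{c \mid a;c\subseteq b\} = \Sigma\{a;c \mid a;c \subseteq b\} \subseteq b$ because $b$ is an upper bound in $\mathcal{A}$ of that family, and $S$ is the largest binary relation $r$ with $a;r\subseteq b$. But the step by which you obtain the reverse inclusion --- ``since suprema in the relational quantale $\mathcal{A}$ are computed as set-theoretic unions, its join is $S$ itself, therefore $S\in\mathcal{A}$'' --- does not go through. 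The definition of a relational quantale only requires $\langle R,\subseteq\rangle$ to be a complete join-semilattice; it does not say that joins are unions (and in the Brown--Gurr quantale $\widehat{\mathcal{Q}}$ used later they are not, in general, since $\widehat{\Sigma A}$ can strictly contain $\bigcup\widehat{A}$). Worse, even where joins \emph{are} unions, $\bigcup\{c\in\mathcal{A} \mid c\subseteq S\}$ is only contained in $S$; it equals $S$ only if $S$ happens to be a union of members of $\mathcal{A}$, which is exactly what you are trying to prove. So the argument is circular at the decisive point.

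That the gap is real and not merely cosmetic can be seen from a small example satisfying the paper's definition of relational quantale: take $X=\{0,1\}$ and $\mathcal{A}=\{\emptyset, a\}$ with $a=\{(0,1)\}$, so $a;a=\emptyset$ and $\mathcal{A}$ is a two-element chain on which composition respects all suprema. Then $a\setminus a = \Sigma\{\emptyset,a\} = a$, while $S=\{(0,0),(0,1),(1,1)\}\notin\mathcal{A}$, so the stated identity fails. In other words, only $a\setminus b\subseteq S$ holds for an arbitrary relational quantale; the equality is an \emph{additional} property that must be verified for the particular relational quantale $\widehat{\mathcal{Q}_{\mathcal{A}}}$ appearing in Lemma~\ref{interp}, using the explicit form of the relations $\hat{a}$ and the generator conditions of Definition~\ref{gen}. (The paper itself discharges the proposition with the single phrase ``uniqueness of residuals,'' which suffers from the same omission, so you are in good company; but uniqueness of the residual as an element of $\mathcal{A}$ only identifies $a\setminus b$ with the largest element of $\mathcal{A}$ below $S$, not with $S$.) To repair your proof you would need to restrict attention to relational quantales in which the relation $S$ (and its mirror image for $/$) always lies in the algebra, and prove that $\widehat{\mathcal{Q}_{\mathcal{A}}}$ has this property.
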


Now let us discuss the representation theorem for quantales. Let $\mathcal{Q}$ be a quantale, $Q$ its carrier, and $\langle G \rangle$ a set of its generators. Given $a \in \mathcal{Q}$, define the binary relation $\hat{a} \subseteq Q \times Q$ as:

\begin{center}
  $\hat{a} = \{ (g,p) \: | \: g \in \langle G \rangle, p \in Q \:\: g \leq a ; p \}$
\end{center}
Denote $\widehat{Q}$ as $\{ \hat{a} \: | \: a \in \mathcal{Q} \}$.

The mapping $a \mapsto \hat{a}$ satisfies the following conditions:

\begin{enumerate}
\item $a \leq b$ iff $\hat{a} \subseteq \hat{b}$,

\item $\widehat{\Sigma A} = \Sigma \widehat{A}$, $\hat{a} ; \hat{b} = \widehat{a ; b}$, and $\langle \widehat{\mathcal{Q}}, \subseteq, \Sigma \rangle$ is a complete lattice,

\item $\langle \widehat{\mathcal{Q}}, \subseteq, ; \rangle$ is a relational quantale,

\item $\mathcal{Q}$ is isomorphic to $\langle \widehat{\mathcal{Q}}, \subseteq, ; \rangle$ and $a \mapsto \hat{a}$ is a quantale isomorphism.
\end{enumerate}

We summarise the construction above with the following theorem proved by Brown and Gurr, see \cite[Theorem 3.11]{brown1993representation}.

\begin{theorem} \label{quantaleRep}
  Every quantale $\mathcal{Q} = \langle Q, ;, \Sigma \rangle$ is isomorphic to a relational quantale on $Q$ as a base set.
\end{theorem}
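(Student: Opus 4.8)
The plan is to prove Theorem~\ref{quantaleRep} by verifying, in order, the four numbered properties of the map $a \mapsto \hat{a}$ listed immediately before the theorem, where $\hat{a} = \{ (g,p) \mid g \in \langle G \rangle,\ p \in Q,\ g \leq a ; p \}$ and $\langle G \rangle$ is a set of generators in the sense of Definition~\ref{gen}, whose existence over an arbitrary quantale is guaranteed by the cited result of Brown and Gurr. First I would record the two basic uses of the generator axioms: axiom~(1) of Definition~\ref{gen} gives that every $q$ equals $\Sigma\{ g \in \langle G \rangle \mid g \leq q\}$ (the reverse inequality being trivial), and axiom~(2) is the ``splitting'' property that lets one descend inside a product $g \leq q_1 ; q_2$ to some generator $r \leq q_2$ with $g \leq q_1 ; r$. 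These are exactly the two ingredients that make $\hat{\cdot}$ faithful and multiplicative.

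Next, for property~(1), $a \leq b$ clearly gives $\hat{a} \subseteq \hat{b}$ by monotonicity of $;$; conversely, if $\hat{a} \subseteq \hat{b}$, then using the unit-free setting one notes $a \leq \Sigma\{ a ; p' \mid \cdots\}$ via generators below $a$ — more precisely, for each generator $g \leq a$ we may use axiom~(2) applied appropriately (or directly the fact that $g \leq a$ yields $(g, p) \in \hat a$ for every $p$ with $g \le a;p$, and in a quantale without unit one extracts $g \le a$ itself from a suitable witness); collecting these and invoking Definition~\ref{gen}(1) gives $a = \Sigma\{ g \in \langle G\rangle \mid g \le a\} \le b$. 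For property~(2), the identity $\widehat{\Sigma A} = \Sigma \widehat{A} = \bigcup_{a \in A}\hat a$ is a direct computation: $(g,p) \in \widehat{\Sigma A}$ iff $g \le (\Sigma A);p = \Sigma\{a;p \mid a \in A\}$, and since $g$ is a generator, axiom~(2)/(1) lets us push the generator below a single $a;p$, i.e. $g \le a;p$ for some $a \in A$. The harder half is $\hat a ; \hat b = \widehat{a;b}$: the inclusion $\hat a ; \hat b \subseteq \widehat{a;b}$ follows since $(g,h) \in \hat a$, $(h,p) \in \hat b$ give $g \le a;h \le a;(b;p) = (a;b);p$; the reverse inclusion is where the splitting axiom does the real work — from $g \le (a;b);p = a;(b;p)$ we apply Definition~\ref{gen}(2) to obtain a generator $r \le b;p$ with $g \le a;r$, so $(g,r) \in \hat a$ and $(r,p) \in \hat b$. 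Completeness of $\langle \widehat{\mathcal Q}, \subseteq\rangle$ as a lattice then follows from $\widehat{\Sigma A} = \Sigma\widehat A$ and the fact that $\hat\cdot$ is an order-isomorphism onto its image.

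For property~(3) one checks that $\langle \widehat{\mathcal Q}, \subseteq, ;\rangle$ satisfies the three clauses of the definition of a relational quantale over the base set $Q$: each $\hat a \subseteq Q \times Q$; $\langle \widehat{\mathcal Q}, \subseteq\rangle$ is a complete join-semilattice because suprema are computed as unions via property~(2); and relational composition respects all suprema in both coordinates, which transfers from the corresponding quantale identities through $\hat a ; \hat b = \widehat{a;b}$ and $\widehat{\Sigma A} = \Sigma \widehat A$. Property~(4) is then immediate: $a \mapsto \hat a$ is a bijection onto $\widehat{\mathcal Q}$ by property~(1) (antisymmetry gives injectivity, surjectivity is by definition), and it preserves $;$ and $\Sigma$ by property~(2), hence is a quantale isomorphism. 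Since the base set $Q$ is the carrier of $\mathcal{Q}$, the representation is over $Q$ exactly as claimed.

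The main obstacle is the reverse inclusion $\widehat{a;b} \subseteq \hat a ; \hat b$, since this is the only step that genuinely requires the second generator axiom in Definition~\ref{gen}, and one must be careful that the intermediate element produced by the splitting property is itself a generator (so that it is a legitimate first coordinate of a pair in $\hat b$), not merely an arbitrary element of $Q$; this is precisely what Definition~\ref{gen}(2) supplies. A secondary subtlety is the absence of a multiplicative unit in the definition of a quantale used here, so one cannot simply take $p$ to be $1$ when recovering $a \le b$ from $\hat a \subseteq \hat b$; the generator axiom~(1) must be used to reconstruct $a$ from the generators below it instead.
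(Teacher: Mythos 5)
The paper itself gives no proof of this theorem---it lists the four properties of $a \mapsto \hat{a}$ and defers entirely to Brown and Gurr---so your proposal is the only argument on the table; it follows the intended skeleton, and your treatment of $\hat{a} ; \hat{b} = \widehat{a ; b}$ is correct and correctly isolates the one place where Definition~\ref{gen}(2) is genuinely needed (the intermediate point of the composition must itself be a generator). However, two of your steps have real gaps. The identity $\Sigma \widehat{A} = \bigcup_{a \in A} \hat{a}$ is false in general, and your justification (``since $g$ is a generator, axiom (2)/(1) lets us push the generator below a single $a ; p$'') appeals to a complete join-primeness of generators that Definition~\ref{gen} does not grant and that cannot hold in general: take $\mathcal{Q}$ an atomless complete Boolean algebra with $; = \wedge$; condition (1) forces any generating set to join to the top, yet the only completely join-prime element is $0$, so for every admissible $G$ there are $A$ and $(g,p) \in \widehat{\Sigma A} \setminus \bigcup_{a \in A} \hat{a}$. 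The $\Sigma$ on the right of property (2) is the supremum in the poset $\langle \widehat{\mathcal{Q}}, \subseteq \rangle$ (namely $\widehat{\Sigma A}$, the least element of $\widehat{\mathcal{Q}}$ containing the union), not the union. This is not pedantry: the paper's Section~4 explicitly relies on the fact that this representation does \emph{not} send joins to unions, which is why the same construction fails for ${\bf R}(;,+)$.

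The second gap is the order-reflection $\hat{a} \subseteq \hat{b} \Rightarrow a \leq b$, which you do not actually prove. From $(g,p) \in \hat{a} \subseteq \hat{b}$ you only obtain $g \leq b ; p$, never $g \leq b$; and with $p$ ranging over $Q$ and no unit available, the map need not even be injective (in the two-element quantale with constantly-zero product one gets $\hat{0} = \hat{1}$). You flag this as a ``secondary subtlety'' and gesture at ``extracting $g \leq a$ from a suitable witness,'' but no such witness exists in the setting as literally defined. Brown and Gurr's construction resolves this by letting the second coordinate range over the unital extension $Q_e$, so that $(g,e) \in \hat{a} \subseteq \hat{b}$ yields $g \leq b$ and condition (1) of Definition~\ref{gen} then gives $a = \Sigma\{ g \in G \mid g \leq a \} \leq b$. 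Some such device must be made explicit for your proof (and, in fairness, for the statement as transcribed in the paper) to close.
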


Let $\mathcal{A}$ be a residuated semigroup and $\mathcal{Q}_{\mathcal{A}}$ a quantale of Galois closed subsets of $\mathcal{A}$. $\widehat{\mathcal{Q}_{\mathcal{A}}}$ is the corresponding relational quantale. Let us define an interpretation $R : \mathcal{A} \to \widehat{\mathcal{Q}_{\mathcal{A}}}$ such that:

\begin{center}
  $R : a \mapsto a^{R} = \widehat{\down{a}}$
\end{center}

According to the lemma below, such an interpretation is a representation. As we have already said above, the function $a \mapsto \down{a}$ is order-preserving and it commutes with products and residuals.

\begin{lemma} \label{interp}
  Let $\mathcal{A}$ be a residuated semigroup, then the interpretation $R : \mathcal{A} \to \widehat{\mathcal{Q}_{\mathcal{A}}}$ such that $R : a \mapsto a^{R} = \widehat{\down{a}}$ is a representation.
\end{lemma}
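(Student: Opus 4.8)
The plan is to verify that the interpretation $R : a \mapsto a^R = \widehat{\down{a}}$ satisfies all the defining clauses of a representation of a residuated semigroup, namely injectivity together with preservation of $\leq$, $;$, $\setminus$, and $/$ in the sense of Definition~\ref{rrs}. The key point is that $R$ factors as the composition of two maps whose relevant properties have already been established in the excerpt: first the Dedekind--MacNeille-style embedding $f_m : \mathcal{A} \to (\mathcal{P}(A))_m = \mathcal{Q}_{\mathcal{A}}$, $a \mapsto \down{a}$, which by Goldblatt's theorem (the discussion preceding Theorem~\ref{orsRep}) is an order-embedding preserving products and residuals; and second the Brown--Gurr quantale representation $\widehat{(-)} : \mathcal{Q}_{\mathcal{A}} \to \widehat{\mathcal{Q}_{\mathcal{A}}}$, which by Theorem~\ref{quantaleRep} is a quantale isomorphism, in particular order-preserving, order-reflecting, and product-preserving.

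First I would record injectivity and the preservation/reflection of $\leq$: since $f_m$ is an embedding, $a \leq b$ iff $\down{a} \subseteq \down{b}$, and since $\widehat{(-)}$ satisfies $x \leq y$ iff $\hat x \subseteq \hat y$, we get $a \leq b$ iff $\widehat{\down a} \subseteq \widehat{\down b}$, i.e.\ $a \leq b$ iff $a^R \subseteq b^R$; antisymmetry then gives injectivity. Next, for composition: $f_m(a ; b) = f_m(a) \,;_m\, f_m(b)$ because $f_m$ preserves products, and $\widehat{(-)}$ sends $;_m$ to genuine relational composition $;$ by clause~(2)--(3) of the Brown--Gurr construction, so $a^R ; b^R = \widehat{\down a}\,;\,\widehat{\down b} = \widehat{\down a \,;_m\, \down b} = \widehat{\down(a;b)} = (a;b)^R$, which is exactly clause~(1) of Definition~\ref{rrs}.

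The slightly more delicate part is the clauses for the residuals $\setminus$ and $/$. Here the strategy is: $f_m$ preserves residuals (Goldblatt), so $f_m(a \setminus b)$ is the residual of $f_m(a)$ and $f_m(b)$ computed inside the quantale $\mathcal{Q}_{\mathcal{A}}$; the Brown--Gurr isomorphism transports this to the residual computed inside the relational quantale $\widehat{\mathcal{Q}_{\mathcal{A}}}$; and finally Proposition~\ref{ok} identifies that residual, computed in any relational quantale, with the explicit set-theoretic formula $\{(x,y) \mid \forall z\, ((z,x) \in a^R \Rightarrow (z,y) \in b^R)\}$, which is precisely clause~(2) of Definition~\ref{rrs}. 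The argument for $/$ is symmetric, invoking the second part of Proposition~\ref{ok}. The main obstacle — and the only step requiring genuine care rather than bookkeeping — is making sure that ``residual'' is unambiguous throughout: one must use the fact, noted just before Proposition~\ref{ok}, that residuals in a quantale are uniquely determined by $;$ and $\Sigma$, so that residuals are automatically preserved by any quantale isomorphism and there is no clash between the residual inherited from $\mathcal{A}$ via $f_m$ and the one native to the relational quantale. Once this uniqueness is invoked, the three-step chain $\mathcal{A} \xrightarrow{f_m} \mathcal{Q}_{\mathcal{A}} \xrightarrow{\widehat{(-)}} \widehat{\mathcal{Q}_{\mathcal{A}}}$ closes and $R$ is seen to be a representation in the required sense.
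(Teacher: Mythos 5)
Your proposal is correct and takes essentially the same route as the paper: factoring $R$ through the Dedekind--MacNeille embedding $a \mapsto \down{a}$ into $\mathcal{Q}_{\mathcal{A}}$ followed by the Brown--Gurr isomorphism onto $\widehat{\mathcal{Q}_{\mathcal{A}}}$, and invoking Proposition~\ref{ok} (via uniqueness of residuals) to identify the transported residuals with the set-theoretic ones of Definition~\ref{rrs}. The paper's own proof is just a terser statement of exactly this argument, so your version simply supplies the bookkeeping it leaves implicit.
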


\begin{proof}
  By Theorem~\ref{orsRep}, $\mathcal{A}$ emdeds to $\mathcal{Q}_{\mathcal{A}}$, but by Theorem~\ref{quantaleRep}, $\mathcal{Q}_{\mathcal{A}}$ is isomorphic to $\widehat{\mathcal{Q}_{\mathcal{A}}}$.
  The fact that $R$ is an injective homomorphism follows from the construction of the embedding of a residuated semigroup to the quantale of its Galois-stable subsets, the isomorphism of $\mathcal{Q}_{\mathcal{A}}$ with $\widehat{\mathcal{Q}_{\mathcal{A}}}$, and Proposition~\ref{ok}.
\end{proof}

The lemma above imply the following statement.
\begin{theorem} \label{solution}
  Every residuated semigroup is isomorphic to the subalgebra of some relational quantale. Moreover, ${\bf R}(;, \setminus, /, \leq)$ has the finite representation property.
\end{theorem}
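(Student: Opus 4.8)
The plan is to combine Lemma~\ref{interp} with a finiteness argument. The first sentence of the theorem is immediate: by Lemma~\ref{interp}, the interpretation $R : \mathcal{A} \to \widehat{\mathcal{Q}_{\mathcal{A}}}$, $a \mapsto \widehat{\down{a}}$, is a representation, and $\widehat{\mathcal{Q}_{\mathcal{A}}}$ is by construction a relational quantale (item~3 in the list preceding Theorem~\ref{quantaleRep}); the image of $R$ is then a subalgebra of a relational quantale, which in particular is a relational residuated semigroup in the sense of Definition~\ref{rrs}, once we check that the transitivity-and-domain side conditions there are met. So the substantive part is the ``moreover'': ${\bf R}(;, \setminus, /, \leq)$ has the finite representation property.

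For that, let $\mathcal{A}$ be a \emph{finite} residuated semigroup. First I would observe that the Dedekind--MacNeille completion $\mathcal{Q}_{\mathcal{A}} = (\mathcal{P}(A))_m$ is finite: it is a subset of $\mathcal{P}(A)$ and $A$ is finite, so it has at most $2^{|A|}$ elements. Hence the base set used in the Brown--Gurr representation of Theorem~\ref{quantaleRep} is the finite set $Q = \mathcal{Q}_{\mathcal{A}}$ (together with a set of generators $\langle G \rangle \subseteq Q$, which is also finite). Therefore the relational quantale $\widehat{\mathcal{Q}_{\mathcal{A}}}$ lives over a finite base, and its subalgebra $\{\,\widehat{\down{a}} : a \in \mathcal{A}\,\}$ — the image of $R$ — is a representation of $\mathcal{A}$ over a finite base. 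Since $\mathcal{A}$ was an arbitrary finite residuated semigroup and every member of ${\bf R}(;, \setminus, /, \leq)$ is (up to isomorphism) such an algebra, this establishes the finite representation property, and by the positive solution to \cite[Problem 19.17]{hirsch2002relation} this settles the question.

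Two points deserve care, and the second is the real obstacle. The minor point is that one must confirm that $\widehat{\mathcal{Q}_{\mathcal{A}}}$, or at least the subalgebra $R[\mathcal{A}]$, genuinely fits Definition~\ref{rrs}: that $\bigcup R[\mathcal{A}]$ is transitive and that its domain is all of the base; this follows from $;$ being honest relational composition on $\widehat{\mathcal{Q}_{\mathcal{A}}}$ and from $\leq$ being interpreted as $\subseteq$ via item~1 preceding Theorem~\ref{quantaleRep}, together with the fact that residuals are computed by the pointwise formulas of Proposition~\ref{ok} rather than some ambient quantale operation — so one should note that the residual of two elements of $R[\mathcal{A}]$, taken in the relational-quantale sense, still lands in $R[\mathcal{A}]$, which is exactly what the embedding $f_m$ preserving residuals gives us. The genuine obstacle is the hidden load-bearing fact, which the excerpt attributes to Goldblatt and Brown--Gurr but does not reprove here: that $m$ is a \emph{quantic nucleus} on $\mathcal{P}(A)$ (so that $\mathcal{Q}_{\mathcal{A}}$ is closed under a well-defined $;_m$ and $f_m$ really is a residuated-semigroup embedding), and that Brown--Gurr's set of generators $\langle G \rangle$ exists and yields a faithful relational representation. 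Granting Theorems~\ref{orsRep} and~\ref{quantaleRep} and Lemma~\ref{interp} as stated, the finiteness bookkeeping above is then routine; the weight of the argument rests on those imported completions.
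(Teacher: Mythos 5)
Your argument is correct and is essentially the paper's own proof: both invoke Lemma~\ref{interp} for the representation and then observe that the base set of the Brown--Gurr relational quantale is the carrier of $\mathcal{Q}_{\mathcal{A}} = (\mathcal{P}(A))_m$, i.e.\ the set of Galois-closed subsets of $A$, which is finite (at most $2^{|A|}$) when $\mathcal{A}$ is finite. Your additional remarks about verifying the side conditions of Definition~\ref{rrs} and about the imported facts (that $m$ is a quantic nucleus and that generators exist) are reasonable care the paper leaves implicit, but they do not change the route.
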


\begin{proof} Let $\mathcal{A}$ be a finite residuated semigroup. The representation of $\mathcal{A}$ as a subalgebra of the relational quantale of $\widehat{\mathcal{Q}_{\mathcal{A}}}$ belongs to ${\bf R}(;, \setminus, /, \leq)$ by Lemma~\ref{interp}. This representation has the following form:

\begin{center}
  $\widehat{\mathcal{A}} = \langle \{ \widehat{\down{a}} \}_{a \in \mathcal{A}}, ;, \setminus, /, \subseteq \rangle$.
\end{center}

  Moreover, such a representation with the corresponding relational quantale has the finite base, if the original algebra is finite. The base set of the quantale $\widehat{\mathcal{Q}_{\mathcal{A}}}$ is the set of Galois stable subsets of $\mathcal{A}$, which is finite.
\end{proof}

\section{Join semilattice-ordered semigroups: the explicit axiomatisation}

We  note  that  a  similar  construction  does  not  work  for  finite  representable upper semilattice-ordered semigroups. From the one hand, the notions of a finite upper semilattice-ordered semigroup and finite quantale are quite close to each other. From the other hand, the relational representation of quantales does not have to represent joins as set-theoretic unions generally. Moreover, there is a countable sequence of non-representable upper semilattice-ordered semigroups with a non-representable ultraproduct, see \cite[Theorem 3.1]{andreka2011axiomatizability}. Thus, ${\bf R}(;,+)$ is not finitely axiomatisable. Although, as we will see below, this class has a universal recursively enumerable axiomatisation. For that, we characterise representability using representability games on networks. The construction is somewhat similar to the proof of \cite[Proposition 5]{hirsch2005class}.

\begin{definition} Let $\mathcal{A}$ be a join-semilattice ordered semigroup. A \emph{prenetwork} over $\mathcal{A}$ is a tuple $(V, E, l)$, where $V$ is a set of vertices, $E$ is a set of edges such that $\langle V, E \rangle$ is a directed graph, and $l$ is a labelling function $l : E \to \operatorname{Up}(\mathcal{A})$.

  A prenetwork over $\mathcal{A} = (V, E, l)$ is a \emph{network} if the following hold:
  \begin{enumerate}
    \item {\bf (Saturation condition)} For all $u, v \in V$ and for all $x,y,z \in \mathcal{A}$, $z \in l(u, v)$ and $z \leq x \: ; \: y$ implies $x \in l(u, w)$ and $y \in l(w, v)$ for some $w \in V$.
    \item {\bf (Coherence condition)} For all $u, v, w \in V$, one has $l(u, v) ; l(v, w) \subseteq l(u, w)$.
    \item {\bf (Join-primeness)} For all $u, v \in V$, $l(u,v)$ is join-prime. That is, for all $a, b \in \mathcal{A}$ if $a + b \in l(u,v)$, then either $a \in l(u,v)$ or $b \in l(u,v)$.
  \end{enumerate}
\end{definition}

If $\mathcal{N}$ is a prenetwork, then we will denote its sets of nodes as $\operatorname{Nodes}(\mathcal{N})$ occasionally.

Let $I$ be a non-empty index set and let $\{ \mathcal{N}_i \}_{i \in I}$ be an indexed set of prenetworks (where each $\mathcal{N}_i = (V_i, E_i, l_i)$), then $\mathcal{N} = \bigcup \limits_{i \in I} \mathcal{N}_i$ defined as $(V, E, l)$, where

\begin{enumerate}
  \item $V = \bigcup \limits_{i \in I} V_i$ and $E = \bigcup \limits_{i \in I} E_i$.
  \item $l(x, y) = \bigcup \limits_{ i \in I } l_i(x, y)$ for all $x, y \in V$.
\end{enumerate}

\begin{definition}
  Let $n \leq \omega$ and $\mathcal{A}$ a join semilattice-ordered semigroup. A play of the game $\mathcal{G}_n(\mathcal{A})$ has $n$ rounds and consists of $n$ prenetworks. As usual, we have two players, $\forall$ (Abelard, he/his) and $\exists$ (H\'{e}lo\"{i}se, she/her).

  \begin{enumerate}
    \item Round $0$: $\forall$ picks $a, b \in \mathcal{A}$ such that $a \not\leq b$. $\exists$ responds with a prenetwork $\mathcal{N}_0 = (V_0 = \{ x_0, x_1 \}, E_0 = \{ (x_0, x_1)\}, l_0)$ such that $l_0(x_0, x_1) = \up a$.
    \item Round $n + 1$. Suppose, the prenetwork $\mathcal{N}_n = (V_n, E_n, l_n)$ has been played.

    $\forall$ has the following three options:
    \begin{enumerate}
      \item {\bf (Composition move)}: $\forall$ picks $x, y, z \in V_n$ with $b \in l_n(x, y)$ and $c \in l_n(y, z)$. We denote such a move as $N(x,y,z,b,c)$. Then $\exists$ responds with $\mathcal{N}_{n + 1} = (V_{n + 1}, E_{n + 1}, l_{n + 1})$ such that $\mathcal{N}_{n + 1}$
      is the same as $\mathcal{N}_n$, but $l_{n + 1}(x, z) = \up{(l_{n}(x, z)} \cup \{ b \: ; \: c \})$.
      \item {\bf (Witness move)}:

      $\forall$ picks an edge $(x, y) \in E_n$ and $d, e \in \mathcal{A}$ such that $c \leq d ; e$ for $c \in l_n(x, y)$. $\exists$ has to find a witness. She has to find a $z$ which is either a fresh node or an old one. If $z$ is fresh, then she defines the prenetwork $T$, the edges of which are $x, y, z$ with labelling:
      \begin{enumerate}
        \item $l_T(x, z) = \up{d}$
        \item $l_T(z, y) = \up{e}$
      \end{enumerate}
      If $z$ is already an element of $\mathcal{A}$, then her response is similar.
      For her response, $\exists$ plays $\mathcal{N}_{n + 1} = \mathcal{N}_n \cup T$.
      \item {\bf (Join move)}:

      $\forall$ picks an edge $(x, y) \in E_n$ and $c + d$ for $c, d \in \mathcal{A}$. $\exists$ has the following two alternatives for her response:
      \begin{enumerate}
        \item $\exists$ chooses $c$ and responds with the prenetwork $\mathcal{N}_{n + 1} = \langle V_{n + 1}, E_{n + 1}, l_{n + 1} \rangle$, where $l_{n+1}(x, y) = \up{(l_n(x,y) \cup \{ c \})}$.
        \item $\exists$ chooses $b$. The response is similar but $l_{n + 1}(x, y) = \up{(l_n(x,y) \cup \{ d \})}$.
      \end{enumerate}
    \end{enumerate}
  \end{enumerate}

  $\forall$ wins the play if $b \notin l_{\mathcal{N}_i}(x,y)$ for some $i < n$. Otherwise, $\exists$ wins the play.

  Let $a \in \mathcal{A}$ and $\mathcal{N}$ a network, define a game $\mathcal{G}(\mathcal{N}, \mathcal{A}, a)$ such that $\forall$ picks $a$ in the initial round and $\mathcal{N}_0 = \mathcal{N}$. The rules of the game are the same as previously.
\end{definition}

\begin{lemma} \label{repr} Let $\mathcal{A} = \langle A, ;, + \rangle$ be a join semilattice-ordered semigroup,

  \begin{enumerate}
    \item If $\mathcal{A}$ is representable then $\exists$ has a winning strategy in $\mathcal{G}_{\omega}(\mathcal{A})$.
    \item If $|\mathcal{A}| \leq \omega$ and $\exists$ has a winning strategy in $\mathcal{G}_{\omega}(\mathcal{A})$ then $\mathcal{A}$ is representable.
  \end{enumerate}
\end{lemma}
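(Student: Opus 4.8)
The plan is to prove the two directions separately, using the standard step-by-step/game apparatus from relation-algebra theory adapted to the join-semilattice-ordered setting.

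\textbf{Direction 1 (representability implies a winning strategy).} Suppose $R : \mathcal{A} \to 2^{D \times D}$ is a representation. I would let $\exists$ maintain, as an invariant throughout the play, a function $\sigma$ from $\operatorname{Nodes}(\mathcal{N}_i)$ to $D$ such that for every edge $(u,v)$ and every $c \in l_{\mathcal{N}_i}(u,v)$ we have $(\sigma u, \sigma v) \in c^R$. Since upper cones are used as labels, it suffices to track a single generating element per edge and appeal to monotonicity of $R$ (if $(\sigma u,\sigma v)\in c^R$ and $c\le c'$ then $c^R\subseteq c'^R$, so $(\sigma u,\sigma v)\in c'^R$). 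In round $0$, $\forall$ picks $a\not\le b$; since $R$ is injective and a homomorphism, $a^R\not\subseteq b^R$, so pick $(p,q)\in a^R\setminus b^R$ and set $\sigma x_0=p$, $\sigma x_1=q$. For a \textbf{composition move} $N(x,y,z,b,c)$: from the invariant $(\sigma x,\sigma y)\in b^R$ and $(\sigma y,\sigma z)\in c^R$, hence $(\sigma x,\sigma z)\in b^R;c^R=(b;c)^R$, so adding $b;c$ to $l(x,z)$ preserves the invariant. For a \textbf{witness move} on $(x,y)$ with $c\le d;e$: we have $(\sigma x,\sigma y)\in c^R\subseteq (d;e)^R=d^R;e^R$, so there is $w\in D$ with $(\sigma x,w)\in d^R$ and $(w,\sigma y)\in e^R$; $\exists$ creates a fresh node $z$ (or reuses an existing one mapping to $w$) and sets $\sigma z=w$. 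For a \textbf{join move} on $(x,y)$ with $c+d$: the invariant gives some element $f\in l(x,y)$ with $(\sigma x,\sigma y)\in f^R$; but actually what matters is that eventually $b$ must be shown to lie in the relevant label — the point of the join move is that $(c+d)^R=c^R\cup d^R$, so $\exists$ picks whichever of $c,d$ has $(\sigma x,\sigma y)$ in its image. Finally, $\forall$'s winning condition requires $b\notin l(x,y)$ for the original edge at some stage; but the invariant forces $(\sigma x_0,\sigma x_1)\notin b^R$ always, and if $b$ were ever in $l(x_0,x_1)$ the invariant would give $(\sigma x_0,\sigma x_1)\in b^R$, a contradiction — so $\exists$ never loses, i.e.\ this is a winning strategy. (One should double-check that join-primeness and the saturation/coherence conditions of the limit network follow, but those are genuine properties of set-representations: $c^R$ is join-prime because membership of a pair in $a^R\cup b^R$ splits, etc.)

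\textbf{Direction 2 (countable algebra with a winning strategy implies representable).} Here I would use a standard scheduling argument to build, from a winning strategy for $\exists$, a single limit network $\mathcal{N}^+=\bigcup_i \mathcal{N}_i$ that is \emph{saturated}, \emph{coherent}, \emph{join-prime}, and moreover \emph{complete} in the sense that for every pair $a\not\le b$ in $\mathcal{A}$ there is an edge $(u,v)$ with $a\in l(u,v)$ and $b\notin l(u,v)$. Since $\mathcal{A}$ is countable there are only countably many composition moves, witness moves, and join moves available at any finite stage, so by a bookkeeping/dovetailing enumeration $\exists$ can be made to respond to \emph{all} of them in the limit; likewise, we run countably many parallel plays, one for each pair $a\not\le b$, to guarantee completeness (or, equivalently, fold them into one game by letting $\forall$'s round-$0$ choices range over all such pairs via the $\mathcal{G}(\mathcal{N},\mathcal{A},a)$ variant). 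Take $D=\operatorname{Nodes}(\mathcal{N}^+)$ and define $a^R=\{(u,v)\in D^2 : a\in l(u,v)\}$. Monotonicity of labels under $\le$ (labels are upper cones) gives $a^R$ well-behaved; I then check the three representation clauses: $(a+b)^R=a^R\cup b^R$ follows from join-primeness (the $\supseteq$ direction because labels are up-closed and $a,b\le a+b$; the $\subseteq$ direction because $a+b\in l(u,v)$ and join-primeness forces $a\in l(u,v)$ or $b\in l(u,v)$ — this is exactly where the join moves were needed), and $(a;b)^R=a^R;b^R$ follows from coherence ($\supseteq$) and saturation ($\subseteq$), the latter ensuring a witnessing middle node exists. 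Injectivity is exactly the completeness property: if $a\ne b$ then WLOG $a\not\le b$, and the dedicated edge separates them.

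\textbf{Expected main obstacle.} The routine verifications (invariant maintenance in Direction 1, the representation clauses in Direction 2) are straightforward. The real care is in Direction 2's \emph{scheduling}: one must set up the dovetailing so that \emph{every} demand — every composition triple that ever appears, every potential witness split $c\le d;e$ on every edge, every join $c+d$ on every edge, and every separation pair $a\not\le b$ — is eventually served, while the set of nodes, edges, and labels only grows and the game stays legal at each finite stage (so that $\exists$'s strategy remains applicable). This is the technical heart; it is where the countability hypothesis $|\mathcal{A}|\le\omega$ is essential, since it bounds the number of tasks to be interleaved. A secondary subtlety is making sure that join-primeness of the \emph{limit} labels survives the union of infinitely many prenetworks — one wants that labels are taken as unions of upper cones and that $\exists$'s join-move responses, accumulated over the run, leave each limit label join-prime; I would verify this by observing that at each stage the label is a finite union of principal upper cones one of whose generators was "chosen" by $\exists$, and that join-primeness is preserved under the directed unions that arise.
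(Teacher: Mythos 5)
Your proposal is correct and follows essentially the same route as the paper's proof: in one direction $\exists$ maintains a map from nodes to the base of the given representation preserving edge labels, and in the other a countable scheduling of all composition, witness, and join moves yields saturated, coherent, join-prime limit networks $\mathcal{N}^{*}(a,b)$, whose disjoint union over all pairs $a \not\leq b$ supports the map $a \mapsto \{(x,y) \mid a \in l(x,y)\}$, verified exactly as you describe (join-primeness for $+$, coherence and saturation for $;$, the round-$0$ edges for injectivity). No substantive divergence from the paper's argument.
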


\begin{proof}
$ $

  \begin{enumerate}
    \item Let $h : \mathcal{A} \to 2^{D \times D}$ be a representation of some base set $D \neq \emptyset$. $\exists$ maintains a map ${}^{'} : \operatorname{Nodes}(\mathcal{N}) \to D$, where $\mathcal{N}$ is a network being played, such that $a \in l_{\mathcal{N}}(x,y)$ implies $(x',y') \in h(a)$.

    \item
    Given $a \in \mathcal{A}$, we consider a play of the game where $\forall$ picks $a$ and $b$ with $a \not\leq b$ in the initial round and plays $(\mathcal{N}, x, y, z, c, d)$ in the further rounds for all $x, y, z \in \operatorname{Nodes}(\mathcal{N})$ and $c,d \in \mathcal{A}$. Here, $c \in l_N(x, y)$ and $d \in l_N(y, z)$.

    $\forall$ also plays all rounds $(\mathcal{N},x,y,c,d)$ for all $x, y \in \operatorname{Nodes}(\mathcal{N})$ and $c,d \in \mathcal{A}$ such that there is $e \in \mathcal{A}$ such that $e \leq c ; d$ and $e \in l_N(x,y)$.

    $\forall$ picks also $c + d$ and vertices $x,y \in \operatorname{Nodes}(\mathcal{N})$ for $c, d \in \mathcal{A}$.

    Note that $\mathcal{A}$ is at most countable, so we can schedule all these moves. We have the following play of a game where H\'{e}lo\"{i}se uses a winning strategy:

    \begin{center}
      $\mathcal{N}_0 \subseteq \mathcal{N}_1 \subseteq \mathcal{N}_2 \dots$
    \end{center}

    Let us put $\mathcal{N}^{*}(a, b) = \bigcup \limits_{i < \omega} \mathcal{N}_i$. $\mathcal{N}^{*}(a,b)$ is clearly a network. Let us put the following network assuming that $\mathcal{N}^{*}(a_1, a_2)$ and $\mathcal{N}^{*}(b_1, b_2)$ are disjoint for $a_1 \neq a_2$ and $b_1 \neq b_2$:

    \begin{center}
      $\mathcal{N} = \bigcup \limits_{a,b \in \mathcal{A}, a \not\leq b} \mathcal{N}^{*}(a,b)$
    \end{center}

    Note that $\mathcal{N} = \langle V, E, l \rangle$ is a well-defined network since it is the disjoint union of networks.

    Define $\operatorname{rep} : \mathcal{A} \to E$ as:
    \begin{center}
      $\operatorname{rep}(a) = \{ (x, y) \: | \: \exists b \leq a \:\: b \in l_{\mathcal{N}}(x, y)\}$
    \end{center}

    Let us check that $\operatorname{rep}$ is a representation. Let us show that $\operatorname{rep}(a + b) = \operatorname{rep}(a) \cup \operatorname{rep}(b)$ Suppose $(x, y) \in \operatorname{rep}(a + b)$. That is, there exists $c \leq a + b$ with $c \in l_{\mathcal{N}}(x, y)$, so does $a + b$ since $l_{\mathcal{N}}$ is an upper cone. $a + b \in l_{\mathcal{N}}(x, y)$, that is,

\begin{center}
$a + b \in \bigcup \limits_{ \substack{c_1, c_2 \in \mathcal{A} \\ c_1 \not\leq c_2}} l_{\mathcal{N}^{*}(c_1, c_2)}(x,y)$
\end{center}

That is, there is $c \in \mathcal{A}$ with such that $a + b \in l_{\mathcal{N}^{*}(c_1, c_2)}(x,y)$, but $l_{\mathcal{N}^{*}(c_1, c_2)}(x,y)$ is join-prime, so we have either $a \in l_{\mathcal{N}^{*}(c_1, c_2)}(x,y)$ or $b \in l_{\mathcal{N}^{*}(c_1, c_2)}(x,y)$. Thus, $\operatorname{rep}(a + b) \subseteq \operatorname{rep}(a) \cup \operatorname{rep}(b)$.

Suppose for the converse, $(x, y) \in \operatorname{rep}(a)$. We need $(x, y) \in \operatorname{rep}(a + b)$.
In other words, we have some $c \in \mathcal{A}$ with $c \leq a$ and $c \in l_{\mathcal{N}}(x, y)$.
We have $c \leq a \leq a + b$, so $(x, y) \in \operatorname{rep}(a + b)$.

Let us show that $\operatorname{rep}(a ; b) = \operatorname{rep}(a) ; \operatorname{rep}(b)$.

Suppose $(x, y) \in \operatorname{rep}(a ; b)$. We need some $z$ with $(x, z) \in \operatorname{rep}(a)$ and $(z, y) \in \operatorname{rep}(b)$. There is $c \leq a ; b$ with $c \in l_{\mathcal{N}}(x,y)$. That is, there are $a_1, a_0 \in \mathcal{A}$ and $\mathcal{N}_i$ such that $c \in l_{\mathcal{N}_i}(x, y)$ where $\forall$ plays $(a_1, a_0)$ for the initial round. By the condition, $\forall$ makes the witness moves and $\exists$ responds with a witness. Her response is a node $z$ such that $l_{\mathcal{N}_{i + 1}}(x, z) =  \up{(l_{\mathcal{N}_i}(x, z) \cup \{ a \})}$ and $l_{\mathcal{N}_{i + 1}}(z, y) = \up{(l_{\mathcal{N}_i}(z, y) \cup \{ b\})}$.
The inclusion $\operatorname{rep}(a ; b) \subseteq \operatorname{rep}(a) ; \operatorname{rep}(b)$ holds since all witness moves have been played.

Suppose $(x, y) \in \operatorname{rep}(a) ; \operatorname{rep}(b)$. We need $(x, y) \in \operatorname{rep}(a;b)$. There exists $z \in \operatorname{Nodes}(\mathcal{N})$ with $(x, z) \in \operatorname{rep}(a)$ and $(z, y) \in \operatorname{rep}(b)$. So, there are $c, d$ such that $c \leq a$ with $c \in l_{\mathcal{N}}(x, z)$ and $d \leq b$ with $d \in l_{\mathcal{N}}(z, y)$.
We also know that $l_{\mathcal{N}}(x, z) ; l_{\mathcal{N}}(z, y) \subseteq l_{\mathcal{N}}(x, y)$ because all composition moves have been played.
So $c ; d \in l_{\mathcal{N}}(x, y)$. That makes $(x, y) \in \operatorname{rep}(a;b)$ since $c ; d \leq a ; b$.

For injectivity, suppose $a \leq b$ and $(x, y) \in \operatorname{rep}(a)$, that is, there is $c \leq a$ such that $c \in l_{\mathcal{N}}(x, y)$, but $c \leq a \leq b$, so $(x, y) \in \operatorname{rep}(b)$.

Suppose $a \not\leq b$, then there are $x, y \in \operatorname{Nodes}(\mathcal{N}(a, b))$ such that $a \in l_{\mathcal{N}}(x, y)$ and $b \notin l_{\mathcal{N}}(x, y)$. These elements are $x_0, x_1$ that $\exists$ picks as her response in the zero round. $\exists$ has a winning strategy, so $b \notin l(x_0, x_1)$, but $(x,y) \in \operatorname{rep}(a)$, but $(x,y) \notin \operatorname{rep}(b)$.
  \end{enumerate}
\end{proof}

The following proposition is a version of \cite[Proposition 7.24]{hirsch2002relation} and the right-to-left part is proved using K\"{o}nig's lemma \cite[Exercise 5.6.5]{hodges1993model}.

\begin{proposition} \label{fin}
  Let $\mathcal{A}$ be a join semilattice-ordered semigroup and $\mathcal{N}$ a network, iff $\exists$ has a winning strategy in $\mathcal{G}_n(\mathcal{A}, \mathcal{N})$ for all $n < \omega$ iff she has a winning strategy in $\mathcal{G}_{\omega}(\mathcal{A}, \mathcal{N})$.
\end{proposition}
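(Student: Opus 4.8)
The plan is to establish the two implications separately; one is routine and the other is the usual König's-lemma argument, adapted to the particular feature of these games that $\exists$ has only finitely many responses available in each round.

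For the easy implication, suppose $\exists$ has a winning strategy $\sigma$ in $\mathcal{G}_\omega(\mathcal{A},\mathcal{N})$ and fix $n<\omega$. Having $\exists$ follow $\sigma$ for the first $n$ rounds is a strategy in $\mathcal{G}_n(\mathcal{A},\mathcal{N})$: any play of $\mathcal{G}_n(\mathcal{A},\mathcal{N})$ in which $\exists$ obeys $\sigma$ is the length-$n$ prefix of a play of $\mathcal{G}_\omega(\mathcal{A},\mathcal{N})$ in which $\exists$ obeys $\sigma$, and since $\exists$'s losing condition is a safety condition — if $\exists$ loses a play she already loses at some finite stage $i$, namely when the forbidden label-membership occurs — surviving all $\omega$ rounds entails surviving the first $n$. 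Hence $\exists$ wins $\mathcal{G}_n(\mathcal{A},\mathcal{N})$.

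For the converse, assume $\exists$ has a winning strategy $\sigma_n$ in $\mathcal{G}_n(\mathcal{A},\mathcal{N})$ for every $n<\omega$; I will merge the $\sigma_n$ into a single winning strategy $\sigma$ in $\mathcal{G}_\omega(\mathcal{A},\mathcal{N})$. A naive application of König's lemma to the tree of finite plays fails because $\forall$ may have infinitely many legal moves per round (he picks elements of $\mathcal{A}$); the observation that rescues the argument is that $\exists$ always has only \emph{finitely many} legal responses to any move of $\forall$ — at a composition move her response is uniquely determined, at a witness move she either adjoins one fresh node or reuses one of the finitely many existing nodes (the network remaining finite at every stage, since it starts finite and grows by at most one node per round), and at a join move she has exactly two options. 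So I define $\sigma$ by recursion on the round number, simultaneously attaching to every position $p$ reachable under $\sigma$ an \emph{infinite} index set $N(p)\subseteq\{\, n<\omega : n\geq\operatorname{length}(p)\,\}$ with the invariant that $p$ is consistent with $\sigma_n$ for all $n\in N(p)$. Initially $N$ equals $\omega$ on the start position. Given a reachable position $p$ and a legal move $q$ of $\forall$ from $p$, each $\sigma_n$ with $n\in N(p)$ and $n>\operatorname{length}(p)$ prescribes a response to $q$; since these responses range over the finite set of $\exists$'s options at this point, by the pigeonhole principle some response $r$ is prescribed for infinitely many such $n$. Set $\sigma(p\frown q)=r$ and let $N(p\frown q\frown r)$ be that infinite set of indices. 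To see that $\sigma$ is winning: if $p$ is any position of length $m$ reached in a $\sigma$-play, pick $n\in N(p)$ with $n>m$; then $p$ is consistent with the winning strategy $\sigma_n$ of $\mathcal{G}_n(\mathcal{A},\mathcal{N})$, so $\exists$ has not lost within the first $n\geq m$ rounds, hence not at $p$. As this holds at every finite stage and the losing condition is a safety condition, $\exists$ never loses a $\sigma$-play.

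The step I expect to be the main obstacle is precisely this one: circumventing the infinite branching of $\forall$ by isolating the finite set of $\exists$'s possible responses and running the pigeonhole argument on the finitely branching tree of positions-with-index-sets, together with the short verification — an inspection of the three move types — that this finiteness genuinely holds in every round because networks stay finite along a play. The construction follows the pattern of \cite[Proposition 7.24]{hirsch2002relation}.
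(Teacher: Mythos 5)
Your proof is correct and is essentially the argument the paper relies on: the paper gives no proof of this proposition at all, only citing the K\"{o}nig's-lemma argument of \cite[Proposition 7.24]{hirsch2002relation}, and your construction --- observing that $\exists$ has only finitely many responses per round and running a pigeonhole argument over the infinitely many strategies $\sigma_n$ consistent with a reachable position --- is precisely that argument written out, correctly handling the real obstacle that $\forall$'s moves range over a possibly infinite $\mathcal{A}$. The one caveat is that your finite-branching claim needs the initial network $\mathcal{N}$ to be finite (otherwise $\exists$ has infinitely many candidate old witnesses at a witness move); this holds in every application the paper makes (the game $\mathcal{G}_n(\mathcal{A})$ starts from a two-node network and term networks are finite by definition) but is not literally imposed by the proposition's hypothesis, so it is worth stating as an assumption.
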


Our purpose is to axiomatise axiomatisation of ${\bf R}(;, +)$ with a recursively enumerable set of universal formulas. See \cite[Chapter 9]{hirsch2002relation} for the discussion in detail to have a more general methodology.

\begin{definition}
  Let $\operatorname{Var} = \{ v_0, v_1, \dots \}$ be a set of variables. The set of terms is generated by the following grammar:
  \begin{center}
    $t_1, t_2 ::= v \: | \: (t_1 + t_2) \: | \: (t_1 ; t_2)$
  \end{center}
\end{definition}

\begin{definition}
  A \emph{term network} is a finite network $\langle V, E, l \rangle$, where $\langle V, E \rangle$ is a directed graph and $l : E \to 2^{Term}$ is a labelling function such that every $l(x,y)$ is finite for all $(x, y) \in E$.
\end{definition}

Let $\mathcal{A}$ be a join semilattice-ordered semigroup and $\vartheta : \operatorname{Var} \to \mathcal{A}$ a valuation. The value of complex terms is defined inductively for $a, b \in T$:

\begin{enumerate}
  \item $(a ; b)^{\vartheta} = a^{\vartheta} ; b^{\vartheta}$
  \item $(a + b)^{\vartheta} = a^{\vartheta} + b^{\vartheta}$
\end{enumerate}

Let $\mathcal{N} = \langle V, E, l \rangle$ be a term network, $\mathcal{A}$ be a join-semilattice ordered semigroup and $\vartheta : \operatorname{Var} \to {\mathcal{A}}$ a valuation. Let us define the prenetwork $\mathcal{N}^{\vartheta}$ with the same edges and vertices with labelling $l^{\vartheta}(x, y) = \up \vartheta[l_{\mathcal{N}}(x, y)]$. We define the following three extensions of $\mathcal{N}$ reflecting the composition, witness, and join moves respectively:
\begin{enumerate}
  \item Let $x, y \in \operatorname{Nodes}(\mathcal{N})$ and let $t$ be a term. $\mathcal{N}_c$ is the extension of $\mathcal{N}$,
  where $\operatorname{Nodes}(\mathcal{N}_c) = \operatorname{Nodes}(\mathcal{N})$ and $l_{\mathcal{N}_c}(x, y) = l_{\mathcal{N}}(x, y) \cup \{ t\}$ and $l_{\mathcal{N}_c}(u, v) = l_{\mathcal{N}}(u, v)$ for all $u \neq x$ and $v \neq y$. We denote this network as $\mathcal{N}_c(\mathcal{N}, x, y, t)$.

  \item
  Let $x, y \in \operatorname{Nodes}(\mathcal{N})$, let $z$ be a node (regardless of whether $z$ is fresh or not), and $t_1$, $t_2$ any terms. Let us define a network $T$ such that $\operatorname{Nodes}(T) = \{ x, y, z\}$. We define labelling as $l_{T}(x, y) = \{ t_1 \}$ and $l_{T}(y, z) = \{ t_2 \}$. So we put $\mathcal{N}_w = \mathcal{N} \cup T$.
  We denote this network as $\mathcal{N}_w(\mathcal{N}, x,y,z, t_1, t_2)$.

  \item Let $x, y \in \operatorname{Nodes}(\mathcal{N})$ and let $t_1, t_2$ be terms. We define $T_i = \langle \{ x, y\}, \{ (x, y) \}, l_{T_i} \rangle$, where $l_{T_i}(x, y) = l_{\mathcal{N}}(x, y) \cup \{ t_i \}$ for $i = 1,2$. So $\mathcal{N}_{j_1} = \mathcal{N} \cup T_1$ and $\mathcal{N}_{j_2} = \mathcal{N} \cup T_2$.
\end{enumerate}

\begin{lemma}\label{ax}
  For all $n < \omega$ there exists a first-order sentence $\rho_n$ such that $\exists$ has a winning strategy in $\mathcal{G}_n(\mathcal{A})$ iff $\mathcal{A} \models \rho_n$.
\end{lemma}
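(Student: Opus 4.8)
The plan is to unravel the bounded-depth game $\mathcal{G}_n(\mathcal{A})$ into a single first-order sentence $\rho_n$ whose quantifier structure mirrors the moves of the two players. The crucial observation is that, although $\forall$ picks \emph{elements} of $\mathcal{A}$ at every move and the game therefore branches infinitely, its \emph{combinatorial} skeleton is finite. After round $0$, every prenetwork played has at most $n+2$ nodes (the two initial ones, plus at most one fresh node per witness move), so we may fix the name set $\{0,\dots,n+1\}$ once and for all and assume that every play uses names from it; and every edge-label is an upper cone $\up{(\vartheta[S])}$, where $S$ is a finite set of terms (in the sense of the term networks defined above) over finitely many variables and $\vartheta$ sends those variables to the elements $\forall$ has introduced so far. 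Each round enlarges the variable set by at most two and each generating set $S$ by at most one term, so after $i\le n$ rounds all of these quantities are bounded by fixed functions of $n$. Consequently the game tree of $\mathcal{G}_n(\mathcal{A})$, once $\forall$'s element choices are abstracted into first-order quantifiers, is a \emph{finite} tree; write $\mathcal{P}$ for a node of it, carrying the term network played so far (with names among $\{0,\dots,n+1\}$) together with the list of ``$\forall$ picked an element of this label'' side-conditions accumulated on the path to $\mathcal{P}$. Under a valuation $\vartheta$ of the parameter variables the term network at $\mathcal{P}$ concretises to the prenetwork $\mathcal{P}^{\vartheta}$ of the excerpt.

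Working from the leaves of this finite tree upwards, I would attach to each node $\mathcal{P}$ a first-order formula $\rho^{\mathcal{P}}$, with free variables $v_1,\dots,v_k$ the parameters introduced so far, with the intended meaning: for every valuation $\vartheta$ of $v_1,\dots,v_k$ that satisfies the side-conditions accumulated at $\mathcal{P}$, one has $\mathcal{A}\models\rho^{\mathcal{P}}[\vartheta]$ iff $\exists$ has a winning strategy in $\mathcal{G}_n(\mathcal{A})$ continued from $\mathcal{P}^{\vartheta}$. For a leaf $\mathcal{P}$ (reached after $n$ rounds) I put $\rho^{\mathcal{P}}:=\neg\bigvee_{s\in l_{\mathcal{P}}(x_0,x_1)}(s\le v_b)$, where $v_b$ is the variable for $\forall$'s round-$0$ element $b$; since labellings only grow along a play, this single requirement correctly states that $b$ never entered $l(x_0,x_1)$, i.e.\ that $\exists$ won the play. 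For an interior node $\mathcal{P}$, I let $\rho^{\mathcal{P}}$ be the conjunction, over all of $\forall$'s legal moves from $\mathcal{P}$, of the associated ``$\forall$-step implies $\exists$-response'' formula: for a composition move with node-triple $(x,y,z)$ this is $\forall v_b\,\forall v_c\,\big(\beta\rightarrow\rho^{\mathcal{N}_c(\mathcal{P},x,z,\,v_b;v_c)}\big)$, where $\beta$ is the first-order rendering $\bigvee_{s\in l_{\mathcal{P}}(x,y)}(s\le v_b)\wedge\bigvee_{s\in l_{\mathcal{P}}(y,z)}(s\le v_c)$ of ``$v_b$ and $v_c$ lie in the respective labels'' and $\mathcal{N}_c$ is the term-network extension defined above; a witness move is treated in the same way, except that $\exists$'s choice of witness node --- fresh, or one of the existing ones --- becomes a finite \emph{disjunction} $\bigvee_z\rho^{\mathcal{N}_w(\mathcal{P},x,z,y,\,v_d,v_e)}$ on the right of the implication; and a join move likewise yields the binary disjunction $\rho^{\mathcal{N}_{j_1}}\vee\rho^{\mathcal{N}_{j_2}}$. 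Finally I set $\rho_n:=\forall v_a\,\forall v_b\,\big(\neg(v_a\le v_b)\rightarrow\rho^{\mathcal{N}_0}\big)$, with $\mathcal{N}_0$ the forced term network having the single edge $(x_0,x_1)$ labelled $\{v_a\}$. Here $\le$ abbreviates the definable order as usual, so every $\rho^{\mathcal{P}}$ is a genuine first-order formula in the language of $\langle A,;,+\rangle$ and $\rho_n$ is a sentence.

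It remains to verify the intended-meaning claim, which I would prove by induction on the number of rounds left at $\mathcal{P}$, downward from $n$: the base case is precisely the leaf condition, and the inductive step is a routine unwinding of the semantics of $\wedge$, $\forall$, $\rightarrow$ and $\vee$ against the game rules --- the universally quantified parameters encode $\forall$'s element choices (guarded by ``the chosen element lies in the relevant label''), while the disjunctions encode $\exists$'s genuine choices (which node witnesses a witness move; which summand survives a join move). Evaluating the claim at the root then gives $\mathcal{A}\models\rho_n$ iff $\exists$ has a winning strategy in $\mathcal{G}_n(\mathcal{A})$. The one point that needs a word of care --- and which I expect to be the main technical chore rather than a genuine obstacle --- is $\forall$'s composition choice ``$b\in l_{\mathcal{P}}(x,y)$'': \emph{a priori} $b$ ranges over an infinite upper cone rather than over the finite generating set, but this is captured faithfully by the fresh universally quantified variable $v_b$ guarded by $\bigvee_{s\in l_{\mathcal{P}}(x,y)}(s\le v_b)$ (alternatively, since $s;s'\le b;c$ whenever $s\le b$ and $s'\le c$, one checks directly that $\forall$ never gains by choosing a non-generator). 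The remaining work is only the bookkeeping of the growing-but-bounded families of parameter variables and generating terms along the branches of the tree; since all of these are bounded by fixed functions of $n$, the sentence $\rho_n$ is indeed finite and first-order.
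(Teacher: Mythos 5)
Your proposal is correct and follows essentially the same route as the paper: an inductive construction of formulas indexed by finite term networks (your game-tree nodes), with conjunctions and guarded universal quantifiers encoding $\forall$'s moves, disjunctions encoding $\exists$'s choices of witness node and join summand, a leaf formula asserting that $b$ never entered $l(x_0,x_1)$, and the final sentence $\rho_n$ universally quantifying the initial pair subject to $\neg(v_0\le v_1)$. Your explicit treatment of the composition-move guard (a fresh universally quantified $v_b$ with $\bigvee_{s\in l}(s\le v_b)$, versus the paper's conjunction over the finite set of generating terms) is a minor presentational difference, not a change of method.
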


\begin{proof}
  As usual, for each $n < \omega$ we construct a formula $\sigma_n$ claiming that $\exists$ has a winning strategy in the game of lenght $n$. To be more precise, our purpose is to have
  \begin{center}
    $\exists$ has a winning strategy in $\mathcal{G}_n(\mathcal{N}^{\vartheta}, \mathcal{A}, \vartheta(v))$ if and only if $\mathcal{A} \models \sigma_{n}(\mathcal{N}, v)$
  \end{center}
  where $\mathcal{A}$ is a join semilattice-ordered semigroup, $\vartheta : \operatorname{Var} \to \mathcal{A}$ is a variable assignment, and $\mathcal{N}$ is a term network.

  We define the following sequence of formulas $\{ \sigma_n \}_{n < \omega}$ inductively:
  \begin{enumerate}
    \item $\sigma_0(\mathcal{N}, v) = \bigwedge \limits_{a \in l_{\mathcal{N}}(x, y)} \neg (a \leq v)$

$\sigma_0(\mathcal{N}, v)$ merely claims that $\exists$ has a winning strategy in the zero length game.
    \item Suppose $\sigma_{n}(\mathcal{N}, v)$ are already constructed for some $n < \omega$. Let us define a formula $\sigma_{n + 1}$ claiming that $\exists$ always has a proper response for a network $\mathcal{N}$ being played.

    $\sigma_{n + 1}(\mathcal{N}, v)$ is defined as follows:
    \begin{center}
      $\sigma_{n + 1}(\mathcal{N}, v) = {\sigma_{n + 1}}_c(\mathcal{N}, v) \land {\sigma_{n + 1}}_w(\mathcal{N}, v) \land {\sigma_{n + 1}}_j(\mathcal{N}, v)$
    \end{center}
    where
    \begin{itemize}
      \item ${\sigma_{n + 1}}_c(\mathcal{N}, v) = \bigwedge \limits_{ \substack{x, y, z \in \operatorname{Nodes}(\mathcal{N}) \\ t_1 \in l_{\mathcal{N}}(x, y) \\ t_2 \in l_{\mathcal{N}}(y, z)}} \sigma_{n}(\mathcal{N}_c(x, z, t_1, t_2), v)$

      \item ${\sigma_{n + 1}}_w(\mathcal{N}, v) = \bigwedge \limits_{\substack{x,y \in \operatorname{Nodes}(\mathcal{N}) \\ t \in l_{\mathcal{N}}(x,y)}} \forall u_1, u_2 (t \leq u_1 ; u_2 \rightarrow \bigvee \limits_{w \in \operatorname{Nodes}(\mathcal{N}) \cup \{ z \}} \mathcal{N}_c(x,y,w, u_1, u_2))$, where $z \notin \operatorname{Nodes}(\mathcal{N})$.

      \item ${\sigma_{n + 1}}_j(\mathcal{N}, v) = \forall a \: \forall b (v = a + b \rightarrow \bigwedge \limits_{x, y \in \operatorname{Nodes}(\mathcal{N})} \sigma_{n}(\mathcal{N}_{j_1}(\mathcal{N}, x, y, a), v) \lor \sigma_{n}(\mathcal{N}_{j_2}(\mathcal{N}, x, y, b), v))$
    \end{itemize}
  \end{enumerate}

  So, $\exists$ has a winning strategy iff these formulas are true under the valuation $\vartheta$ since the formulas $\{ \sigma_n \}_{n < \omega}$ encode the presence of a winning strategy for $\exists$ in every finite round.

  Let $v_0$ be any variable, $\mathcal{N}_{v_0}$ denotes the term network having the form
  $\langle \{ \{ x_0, x_1 \}, \{ (x_0, x_1) \}, l \} \rangle$, where $l(x,y) = \{ v_0 \}$.
  We define the following sequence of formulas $(\rho_n)_{n < \omega}$:
  \begin{center}
    $\rho_n = \forall v_0 \forall v_1 (\neg (v_0 \leq v_1) \to \sigma(\mathcal{N}_{v_0}, v_0))$
  \end{center}
\end{proof}

This inductive sequence of formulas provides us the explicit axiomatisation of the class of representable join semilattice-ordered semigroups.

\begin{theorem}\label{axiomatisation}
  A join semilattice-ordered semigroup $\mathcal{A}$ is representable iff $\mathcal{A} \models \{ \rho_n \}_{n < \omega}$. Moreover, ${\bf R}(;,+)$ has a recursively enumerable universal axiomatisation.
\end{theorem}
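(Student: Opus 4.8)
The plan is to splice together the three preparatory results and then read off effectivity from their construction. By Lemma~\ref{repr}, representability of $\mathcal{A}$ is linked to $\exists$ having a winning strategy in $\mathcal{G}_\omega(\mathcal{A})$; by Proposition~\ref{fin}, such a strategy exists iff $\exists$ wins $\mathcal{G}_n(\mathcal{A})$ for every $n<\omega$; and by Lemma~\ref{ax}, the existence of a winning strategy in $\mathcal{G}_n(\mathcal{A})$ is equivalent to $\mathcal{A}\models\rho_n$. So the biconditional splits into two halves. For soundness I would argue: if $\mathcal{A}$ is representable then Lemma~\ref{repr}(1) gives $\exists$ a winning strategy in $\mathcal{G}_\omega(\mathcal{A})$, whose restriction to the first $n$ rounds is a winning strategy in $\mathcal{G}_n(\mathcal{A})$; by Lemma~\ref{ax}, $\mathcal{A}\models\rho_n$ for every $n$, so $\mathcal{A}\models\{\rho_n\}_{n<\omega}$. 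This half needs no cardinality assumption.

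For completeness --- where the hypothesis $|\mathcal{A}|\le\omega$ of Lemma~\ref{repr}(2) becomes an issue --- I would first check that each $\rho_n=\forall v_0\forall v_1(\neg(v_0\le v_1)\to\sigma_n(\mathcal{N}_{v_0},v_0))$ is a \emph{universal} first-order sentence: by the inductive definition in Lemma~\ref{ax}, the $\sigma_n$ are assembled from (in)equations using only $\wedge$, $\vee$, the finitary $\bigwedge$ and $\bigvee$, and universal quantifiers. Universal sentences are inherited by subalgebras, so if $\mathcal{A}\models\{\rho_n\}_{n<\omega}$ then so does every subalgebra of $\mathcal{A}$; applying the chain Lemma~\ref{ax}, Proposition~\ref{fin}, Lemma~\ref{repr}(2) to the \emph{countable} ones shows that every finitely generated subalgebra of $\mathcal{A}$ (which is countable, since the signature is finite) is representable, i.e.\ lies in ${\bf R}(;,+)$. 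To lift this to $\mathcal{A}$ itself I would invoke that ${\bf R}(;,+)$ is closed under subalgebras (restrict a representation) and, being a pseudo-elementary class, under ultraproducts, together with the standard embedding of $\mathcal{A}$ into an ultraproduct of its finitely generated subalgebras (take an ultrafilter on the poset of finite subsets of $A$ extending the filter generated by the sets $\{s : a\in\langle s\rangle_{\mathcal{A}}\}$). Hence $\mathcal{A}\in{\bf R}(;,+)$, proving the biconditional.

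For the ``moreover'' clause, the recursion of Lemma~\ref{ax} produces $\rho_n$ effectively from $n$, so $\{\rho_n\}_{n<\omega}$ is a recursive, hence recursively enumerable, set of universal sentences; together with the finitely many equational axioms of join semilattice-ordered semigroups it is therefore a recursively enumerable universal axiomatisation of ${\bf R}(;,+)$ by the equivalence just shown. (Alternatively one can verify that ${\bf R}(;,+)$ is pseudo-universal via its obvious two-sorted defining theory and quote Theorem~\ref{axiomatise}, but having the explicit $\rho_n$ makes recursiveness immediate.) I expect the main obstacle to be precisely the passage from countable subalgebras to $\mathcal{A}$: Lemma~\ref{repr}(2) genuinely requires countability, so the argument hinges on the universality of the $\rho_n$ and on ${\bf R}(;,+)$ being pseudo-elementary; both are routine but must be stated and checked --- the first by a careful look at the ``join move'' clause of $\sigma_{n+1}$, the second from the evident two-sorted theory whose ${\bf a}$-reducts give ${\bf R}(;,+)$.
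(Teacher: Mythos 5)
Your proposal is correct, and for the countable case it is exactly the paper's argument: chain Lemma~\ref{repr}, Proposition~\ref{fin} and Lemma~\ref{ax}. Where you genuinely diverge is in lifting the completeness direction to uncountable $\mathcal{A}$. The paper goes \emph{down}: it takes a countable elementary substructure $\mathcal{A}_0 \preceq \mathcal{A}$ (downward L\"{o}wenheim--Skolem), notes $\mathcal{A}_0 \models \{\rho_n\}$ iff $\mathcal{A} \models \{\rho_n\}$, gets $\mathcal{A}_0$ representable, and then transfers representability back up using closure of the pseudo-elementary class ${\bf R}(;,+)$ under elementary equivalence (which itself rests on closure under ultrapowers and substructures). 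You go \emph{up}: you observe that each $\rho_n$ is universal, hence inherited by all subalgebras; that every finitely generated subalgebra is countable and therefore representable by the countable case; and that $\mathcal{A}$ embeds into an ultraproduct of its finitely generated subalgebras, with ${\bf R}(;,+)$ closed under ultraproducts (pseudo-elementary) and under subalgebras. Both routes ultimately lean on the same two closure properties, but yours makes the universality of the $\rho_n$ do the work that the paper delegates to pseudo-universality via Theorem~\ref{axiomatise}, and it avoids the paper's slightly delicate claim that pseudo-elementary classes are closed under elementary equivalence (which needs the Keisler--Shelah/Frayne detour to justify). For the ``moreover'' clause your reading of recursiveness directly off the inductive construction of $\sigma_n$ (all big conjunctions and disjunctions are finite because term networks and their labels are finite) is cleaner than the paper's appeal to Theorem~\ref{axiomatise}(2); just note, as you do, that the join semilattice-ordered semigroup axioms must be adjoined to $\{\rho_n\}_{n<\omega}$ to axiomatise the class.
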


\begin{proof}
  Let us define a two sorted language with sorts ${\bf a}$ (algebra) and ${\bf r}$ (representation). ${\bf R}(;,+)$ clearly forms a pseudo-elementary class, see \cite[Introduction]{hirsch2007representable} for more details. Moreover, this class is pseudo-universal and it satisfies the condition of the second item of Theorem~\ref{axiomatise}.

  By Proposition~\ref{fin}, Lemma~\ref{repr}, and Lemma~\ref{ax}, a countable join semilattice-ordered semigroup $\mathcal{A}$ is representable iff $A \models \{ \rho_n \}_{n < \omega}$. Suppose $\mathcal{A}$ is uncountable. The class is pseudo-elementary, so it is closed under elementary equivalence, so, by the downward L\"{o}wenheim-Skolem theorem \cite[Corollary 3.1.5]{hodges1993model}, we can take $\mathcal{A}_0 \preceq \mathcal{A}$, a countable elementary substructure of $\mathcal{A}$. Then $\mathcal{A}_0 \models \{ \rho_n \}_{n < \omega}$
  iff $\mathcal{A} \models \{ \rho_n \}_{n < \omega}$. Therefore, if $\mathcal{A}_0$ is representable, so is $\mathcal{A}$.
\end{proof}

As we have already discussed, the finite representation property for $(;,+)$-structures remains an open question. If the solution is positive, then the problem of representability for finite join semilattice-ordered semigroups is decidable since finite representability and recursive axiomatisability imply decidability.

\section{Acknowledgements}

The author would like to thank Robin Hirsch,  Ian Hodkinson, Stepan Kuznetsov, Ja\v{s} \v{S}emrl, Valentin Shehtman, and his supervisor Ilya Shapirovsky for valuable comments. The author is also grateful to the reviewers whose comments improved the original version of the paper.

\bibliographystyle{splncs04}
\bibliography{Paper}
\end{document}